\newcommand{\Real}{\mathbb{R}}
\newcommand{\Complex}{\mathbb{C}}
\newcommand{\abs}[1]{\left\vert#1\right\vert}
\newcommand{\set}[1]{\left\{#1\right\}}
\newtheorem{thm}{Theorem}[subsection]
\newtheorem{cor}[thm]{Corollary}
\newtheorem{lem}[thm]{Lemma}
\newtheorem*{lemma}{Lemma}
\newtheorem{prop}[thm]{Proposition}
\newtheorem*{proposition}{Proposition}
\newtheorem*{example}{Example}
\theoremstyle{definition}
\newtheorem{defn}[thm]{Definition}
\theoremstyle{remark}
\newtheorem{rem}[thm]{Remark}
\numberwithin{equation}{subsection}
\title{Parabolic metrics with conical singularities on compact Riemann surfaces}
\author{Santai Qu}
\begin{document}

\maketitle
\begin{abstract}
  In this paper, we prove the existence and uniqueness theorem for parabolic conical metrics on Riemann surfaces in the situation of generalized real angles, positive, zero and negative, by complex analysis, and give an example of this theorem to clarify concrete expressions of parabolic metrics on the two-sphere and generalize the well-known Schwarz-Christoffel formula.
\end{abstract}
\section{Introduction}
The goal of this paper is to prove the existence and uniqueness theorem of parabolic metrics (i.e. constant curvature zero) with conical singularities of arbitrary real angles, not only positive angles, on Riemann surfaces.
\begin{defn}
Let $M$ be a compact Riemann surface.  A conical metric $g$ on $M$ with a \emph{conical singularity} at $P\in M$ with \emph{singular angle} $2\pi \alpha>0$ is a conformal metric on $M$ which could be expressed locally as
\[g=e^{2u}\abs{dz}^2,\]
where $z$ is a local coordinate near $P$ such that $z(P)=0$.  Moreover, the real-valued function $u$ satisfies that
\[u(z)-(\alpha-1)\cdot\log{\abs{z}}\]
is continuous at $P$.  And we call that the metric $g$ represents the divisor 
\[D=\prod_{j=1}^n P_j^{\alpha_j-1}.\]
\end{defn}

This is the classical definition.  In this paper, we only consider the case of constant curvature $K=0$ and extend the above definition to arbitrary real angles, not only positive.

By the work of \cite{McOwen 1988} and \cite{Troyanov 1991}, it is known that for constant curvature $K\leq 0$ and positive conical angles such metric exists uniquely if and only if
\[\int_M K=2\pi(\chi(M)+deg D),\]
where $deg D=\sum_{j=1}^n(\alpha_j-1)$ and $\chi(M)$ is the Euler characteristic of $M$.  This is the well-known Gauss-Bonnet condition.  And in our situation, we shall prove the following existence and uniqueness theorem.
\begin{thm}{\bf(Main Theorem)}
If $M$ is a compact Riemann surface and $D=\prod_jP_j^{(\alpha_j-1)}$ is an arbitrary given divisor with real coefficients,
then there exists a unique parabolic metric $g$ (up to multiplying nonzero constants) representing the divisor $D$ if and only if the Gauss-Bonnet condition holds, that is, 
\[\sum_{j=1}^n(\alpha_j-1)=\chi(M).\]
\end{thm}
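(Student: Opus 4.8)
The plan is to reformulate ``there is a parabolic metric representing $D$'' as ``there is a suitable Abelian differential on $M$'', and then to build that differential out of a differential of the third kind, corrected by a holomorphic $1$-form so that its periods become purely imaginary.

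\emph{The local picture and the reduction.} If $g=e^{2u}\abs{dz}^2$ is parabolic then $u$ is harmonic away from the $P_j$, so on every simply connected chart $u=\RE\log\psi$ for a holomorphic nonvanishing $\psi$, whence $g=\abs{\psi\,dz}^2=\abs{\omega}^2$ for a local holomorphic $1$-form $\omega=\psi\,dz$; and $u-(\alpha_j-1)\log\abs{z}$ is continuous at $P_j$ exactly when $\omega\sim z^{\alpha_j-1}\,dz$ there. Since $\log\psi$ is determined by $u$ only up to an imaginary additive constant, $\omega$ is globally a multivalued holomorphic $1$-form whose monodromy around every loop is a unimodular constant, the local monodromy at $P_j$ being the prescribed $e^{2\pi i(\alpha_j-1)}$. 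Conversely, any multivalued $\omega$ with single-valued $\abs{\omega}$, with these orders at the $P_j$, and with no other zeros or poles produces a parabolic metric $\abs{\omega}^2$ (parabolic because $\log\abs{\psi}$ is harmonic) representing $D$. So the theorem becomes: such an $\omega$ exists, and is unique up to a unimodular constant, if and only if $\sum_j(\alpha_j-1)=-\chi(M)$.

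\emph{Existence.} Fix a meromorphic $1$-form $\sigma$ on $M$ whose divisor $\sum_k m_kQ_k$, of degree $2g-2=-\chi(M)$, avoids $\{P_1,\dots,P_n\}$, and look for $\omega$ in the form $\omega=\sigma\,e^{\int\eta}$ with $\eta$ meromorphic. Matching the prescribed divisor of $\omega$ forces $\eta$ to be a differential of the third kind with a simple pole of residue $\alpha_j-1$ at each $P_j$, a simple pole of residue $-m_k$ at each $Q_k$, and no other poles. By the classical theory of differentials of the third kind such an $\eta$ exists exactly when its prescribed residues sum to zero, i.e.\ exactly when $\sum_j(\alpha_j-1)=\sum_k m_k=-\chi(M)$; this is the sole place the Gauss--Bonnet condition is needed. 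These $\eta$ form an affine space over the space $H^0(M,K_M)$ of holomorphic $1$-forms, of real dimension $2g$; and since $\RE\colon H^0(M,K_M)\to H^1(M;\Real)$ is an $\Real$-linear isomorphism --- it is injective because a holomorphic $1$-form all of whose periods are purely imaginary has exact, hence on a closed surface vanishing, real part, and both spaces have real dimension $2g$ --- there is a unique $\eta$ in this affine space with $\RE\oint_\gamma\eta=0$ for $\gamma$ in a symplectic basis of $H_1(M;\mathbb{Z})$, and then for every loop avoiding the poles (the residues being real). For this $\eta$ the multivalued function $e^{\int\eta}$ has only unimodular monodromy: $e^{2\pi i(\alpha_j-1)}$ around $P_j$, $e^{-2\pi i m_k}=1$ around $Q_k$, and unimodular around the basis loops; hence $\abs{\omega}$ is single valued. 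A local check at each $P_j$ and $Q_k$ shows $\omega$ is a nonvanishing local holomorphic $1$-form off $\{P_j\}$ and $\omega\sim z^{\alpha_j-1}\,dz$ at $P_j$, so $g:=\abs{\omega}^2$ is a parabolic metric representing $D$.

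\emph{Uniqueness and necessity.} If $g_1=e^{2u_1}\abs{dz}^2$ and $g_2=e^{2u_2}\abs{dz}^2$ both represent $D$, then $u_1-u_2$ is globally well defined on $M$ (the chart ambiguities $\log\abs{w'}$ cancel), harmonic off $\{P_j\}$, and bounded near each $P_j$ (the singular terms $(\alpha_j-1)\log\abs{z}$ cancel), hence extends harmonically across the $P_j$ and is constant; so $g_1$ and $g_2$ differ by a positive constant. Necessity of $\sum_j(\alpha_j-1)=-\chi(M)$ follows from the conical Gauss--Bonnet formula with $K\equiv0$, or, within this framework, by applying the residue theorem to the third-kind differential $\eta$ extracted from a given metric. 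I expect the main obstacle to be the bookkeeping in the reduction: tracking the unimodular twisting of $\omega$ --- equivalently, the connection term $d\log w'$ in the transformation law of $2\partial u$ --- so that $\eta$ really is an honest meromorphic differential on $M$, to which the residue theorem and the existence theory of third-kind differentials apply. Once that is set up, existence is ``third-kind differential plus period adjustment'' and uniqueness is the maximum principle for harmonic functions.
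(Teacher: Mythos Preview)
Your proposal is correct. Both you and the paper reduce the problem to the existence and uniqueness of a multivalued holomorphic $1$-form $\omega$ on $M\setminus\{P_j\}$ with unimodular (``normalized'') monodromy and local shape $z^{\alpha_j-1}\,dz$ at each $P_j$; and both construct such an $\omega$ as $(\text{reference meromorphic }1\text{-form})\times\exp\bigl(\int\eta\bigr)$ with $\eta$ a differential of the third kind, followed by a period adjustment to make the monodromy unimodular. So the existence arguments coincide in substance.

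The execution, however, is genuinely different. The paper reaches the reduction through developing maps: it introduces $\bm F$ with $g=\bm F^{*}|dz|^{2}$, computes the Schwarzian $\{\bm F,z\}$, solves the associated Fuchsian ODE by the Frobenius method, and runs a lengthy case analysis on the exponents to show that $d\bm F$ has exactly the local form $z^{\alpha_j-1}\,dz$ (its Theorem~1.0.9); uniqueness is then deduced from the uniqueness of regular Prym differentials. You bypass all of this by using directly that $u$ is harmonic in the flat case, so $g=|\omega|^{2}$ with the local shape read off from the removable-singularity theorem for harmonic functions, and you obtain uniqueness from the maximum principle applied to $u_{1}-u_{2}$. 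Your route is shorter and more elementary for $K=0$; the paper's developing-map machinery is heavier here but is the framework that generalizes to the spherical and hyperbolic cases, where $u$ is not harmonic and your direct reduction is unavailable. One minor remark: for necessity with arbitrary real $\alpha_{j}$ (including nonpositive), your residue-theorem argument on $\eta=d\log(\omega/\sigma)$ is the safe choice; the conical Gauss--Bonnet formula you also mention is usually stated only for $\alpha_{j}>0$.
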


The main tools to prove this theorem are developing maps and Prym differentials.  The idea of developing map is introduced by R. Bryant \cite{Bryant 1988}, Umehara-Yamada \cite{UY00} and Eremenko \cite{Er04}.
\begin{defn}
(Umehara-Yamada\cite{UY00}) Let $g$ be a conformal metric on $M$ of constant curvature one and represent the divisor $D$.  We call a multi-valued locally univalent meromorphic function $\bm{F}$ on $\Sigma=M\setminus\{P_1, \cdot\cdot\cdot, P_n\}$ a \emph{developing map} of the metric $g$ if $g=\bm{F}^*g_{st}$, where $g_{st}$ is the standard metric of curvature one on Riemann sphere as
\[g_{st}=\dfrac{4\abs{z}^2\abs{dz}^2}{(1+\abs{z}^2)^2}.\]
\end{defn}
In our situation, developing maps are defined as following.
\begin{defn}
If $g$ is a conformal metric with constant curvature zero on a compact Riemann surface  and represents the divisor $D$, a \emph{developing map} of $g$ is a multi-valued locally univalent meromorphic function $\bm{F}$ mapping $\Sigma=M\setminus\{P_1, \cdot\cdot\cdot, P_n\}$ into $\mathbb{C}$ satisfying $g=\bm{F}^*(\abs{dz}^2)$, where $\abs{dz}^2$ is the standard metric on complex plane $\mathbb{C}$.
\end{defn}
The existence of developing maps is proved in \cite{Binxu}.  And by the Remark 2.1 in \cite{Binxu}, we know that developing maps also exist for parabolic and hyperbolic (constant negative curvature) conformal metrics with finite conical or cusp singularities.  

  For Prym differentials, we make the following definitions.
\begin{defn}
Let $\Sigma$ be a connected Riemann surface, which need not to be compact, and its fundamental group is denoted by $\pi_1(\Sigma)$.  By a character $\chi$, we mean a group homomorphism
\[\chi: \pi_1(\Sigma)\rightarrow\mathbb{C}^*,\]
which could descend to
\[\chi: H_1(\Sigma)\rightarrow\mathbb{C}^*.\]
A character is said to be \emph{normalized} if the norms of its images are always one.

And, by a multiplicative multi-valued function $\bm{f}$ belonging to the character $\chi$, we mean a collection of function elements, say, $\set{(f,U)}$, where $U$ is open in $\Sigma$.  Moreover, the collection should satisfy:
\begin{itemize}
\item{for any two given elements $(f_1,U_1)$ and $(f_2,U_2)$ in $\bm{f}$, then $(f_2,U_2)$ could be obtained by analytic continuation of $(f_1,U_1)$ along some curve $c$ on $\Sigma$, and}
\item{continuation of a function element $(f,U)$ in $\bm{f}$ along the closed curve $c$ leads to the function element $(\chi(c)f,V)$.}
\end{itemize}
\end{defn}

\begin{defn}
By a \emph{multiplicative differential} or \emph{Prym differential}, we  mean a collection, say $\bm{\omega}$, of triples $(\omega,U,z)$ where $U$ is an open set in $\Sigma$, $z$ is a local coordinate on $U$, and $\omega$ is a holomorphic function of $z$.  Moreover, the triples in $\bm{\omega}$ should behavior compatibly on their intersections.  So if the intersection of two triples, $(\omega,U,z)$ and $(\omega_1,V,\zeta)$, is not empty, then they satisfy that
\[\omega_1(\zeta)d\zeta=\omega(z)dz.\]
\end{defn}

Given two such triples $(\omega_0,U_0,z_0)$ and $(\omega_n,U_n,z_n)$, we say that they are continuation for each other if there is a chain of triples overlapping compatibly, which means we have
\[(\omega_j,U_j,z_j),\,\,\,\,\,\,\,j=0,\cdot\cdot\cdot,n,\]
and
\[\omega_j(z_j)dz_j=\omega_{j+1}(z_{j+1})dz_{j+1},\,\,z_{j+1}=z_{j+1}(Q),\,\,\,\,\,\,Q\in U_j\cap U_{j+1}.\]

By a \emph{multiplicative differential belonging to a character $\chi$}, we mean a multiplicative differential $\bm{\omega}$ and if we continue an element $(\omega,U,z)$ along a closed curve $c$ to the element $(\omega_1,U_1,z)$, then
\[\omega_1(z)=\chi(c)\omega(z),\,\,\,\,\,\,z=z(Q), \,\,\,\,\,Q\in U\cap U_1.\]

The concepts of regularity and divisors with real coefficients for multiplicative functions and Prym differentials will be defined in section 2. 

The uniqueness of parabolic metrics would be proved via the existence and uniqueness theorem of regular multiplicative differentials for a given real divisor and the theorem giving local expressions for developing maps near conical singularities.  And the existence of parabolic metrics will be proved also by the existence and uniqueness theorem of regular multiplicative differentials.

The main idea of the proof is that the differential of a developing map is a regular Prym differential, and conversely developing maps could be obtained by integrating a regular Prym differential.  Moreover, by the definition of developing maps, a parabolic metric on $M$ could be obtained by pulling back the standard metric on $\mathbb{C}$.  Then from the uniquely existence of regular Prym differentials of a given real divisor, we could deduce the existence and uniqueness of developing maps, and hence the uniquely existence of parabolic metrics for a given real divisor.

This proving process is achieved by the following technical theorems.
\begin{thm}
Every divisor $D$ with real coefficients of degree zero, having the form $D=P_1^{\alpha_1}\cdot\cdot\cdot P_r^{\alpha_r}/Q_1^{\alpha_1}\cdot\cdot\cdot Q_r^{\alpha_r}$ with $P_j\neq Q_k$ for all $j, k=1,\ldots, r>0$, is the divisor of a unique (up to a multiplicative constant) regular multiplicative function defined on $\Sigma=M\setminus D$ and belongs to a unique normalized character.
\end{thm}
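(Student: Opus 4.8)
The plan is to translate the statement into one about abelian differentials of the third kind on $M$ via the logarithmic derivative, and then to solve the resulting period problem using the classical fact that a holomorphic differential on a compact Riemann surface all of whose periods are purely imaginary (equivalently, all real) must vanish.

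\emph{Set-up.} Let $g$ be the genus of $M$, fix a canonical homology basis $a_1,b_1,\dots,a_g,b_g$ of $M$ represented by loops avoiding the support of $D$, and let $\gamma^P_j,\gamma^Q_j$ be small positively oriented loops around $P_j,Q_j$. If $\bm f$ is a regular multiplicative function on $\Sigma=M\setminus D$ with divisor $D$ belonging to a character $\chi$, then $\bm\omega:=d\log\bm f$ is a \emph{single-valued} meromorphic differential on $M$: near $P_j$ one has $\bm f=z^{\alpha_j}h(z)$ with $h$ holomorphic and non-vanishing (and $\bm f=z^{-\alpha_j}h(z)$ near $Q_j$), so $\bm\omega$ has simple poles with residue $\alpha_j$ at $P_j$, residue $-\alpha_j$ at $Q_j$, and no other singularity, i.e. $\bm\omega$ is of the third kind. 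Moreover $\chi(c)=\exp\!\big(\oint_c\bm\omega\big)$, so $\chi$ is normalized exactly when $\RE\oint_c\bm\omega=0$ for every cycle $c$; since the puncture periods are $\pm2\pi i\alpha_j$ automatically, this reduces to $\RE\oint_{a_k}\bm\omega=\RE\oint_{b_k}\bm\omega=0$ for all $k$. Conversely, any third-kind differential $\bm\omega$ on $M$ with exactly these singular parts and with all periods purely imaginary defines, via $\bm f(Q):=\exp\!\big(\int_{z_0}^{Q}\bm\omega\big)$, a regular multiplicative function on $\Sigma$ with divisor $D$ and normalized character, determined up to the multiplicative constant from the choice of base point and branch. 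Hence it suffices to prove existence and uniqueness of such a $\bm\omega$.

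\emph{Existence.} Because $\deg D=0$, the prescribed residues sum to zero, so there is at least one third-kind differential $\bm\omega_0$ on $M$ with the required poles and residues; every other such differential has the form $\bm\omega_0+\eta$ with $\eta\in H^0(M,\Omega^1_M)$, and $\eta$ leaves the puncture periods untouched. Consider the $\mathbb R$-linear map $\Phi:H^0(M,\Omega^1_M)\to\mathbb R^{2g}$, $\eta\mapsto\big(\RE\oint_{a_k}\eta,\ \RE\oint_{b_k}\eta\big)_{k}$. If $\Phi(\eta)=0$ then all $a$- and $b$-periods of $i\eta$ are real, so $\IM\int i\eta$ is a globally defined harmonic function on the compact surface $M$, hence constant, which forces $i\eta=0$; thus $\Phi$ is injective, and since $\dim_{\mathbb R}H^0(M,\Omega^1_M)=2g$ it is an isomorphism. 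Setting $v=\big(\RE\oint_{a_k}\bm\omega_0,\ \RE\oint_{b_k}\bm\omega_0\big)_k$, there is a unique $\eta$ with $\Phi(\eta)=-v$, and $\bm\omega:=\bm\omega_0+\eta$ is the unique third-kind differential with the prescribed singular parts and all periods purely imaginary. This $\bm\omega$ yields the desired function and normalized character.

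\emph{Uniqueness.} Let $\bm f_1,\bm f_2$ be regular multiplicative functions on $\Sigma$ with divisor $D$, belonging to normalized characters $\chi_1,\chi_2$. The hypothesis $P_j\neq Q_k$ ensures that near each point of the support of $D$ exactly one of a zero of order $\alpha_j$ or a pole of order $\alpha_j$ occurs, so $\bm h:=\bm f_1/\bm f_2$ has trivial divisor and extends to a holomorphic, non-vanishing multiplicative function on all of $M$ belonging to the normalized character $\chi_1\chi_2^{-1}$. Then $d\log\bm h$ is a single-valued holomorphic differential on $M$ with all periods in $i\mathbb R$, so by the vanishing fact used above $d\log\bm h=0$; hence $\bm h$ is constant on the connected surface $M$, giving $\bm f_1=c\,\bm f_2$ with $c\in\mathbb C^*$ and $\chi_1=\chi_2$. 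This proves uniqueness of the function up to a constant and of the character.

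\emph{Main obstacle.} The heart of the argument is that $\Phi$ is an isomorphism — equivalently, that the normalization ``all periods purely imaginary'' is simultaneously attainable and rigid. This is exactly where compactness of $M$ is used, through the maximum principle forcing a holomorphic differential with purely imaginary periods to vanish; the surrounding points (the local $z^{\alpha_j}$-form of $\bm f$ at the conical points, the residue/degree bookkeeping, and the zero–pole cancellation relying on $P_j\neq Q_k$) are routine once this is established.
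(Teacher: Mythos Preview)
Your proof is correct and rests on the same underlying idea as the paper's: pass from the multiplicative function to the single-valued abelian differential $d\log\bm f$ of the third kind, and use the classical fact that a holomorphic differential on a compact Riemann surface with purely imaginary periods vanishes. The packaging differs slightly. For existence, the paper builds the function as a product $\prod_j\exp\!\big(\alpha_j\!\int\tau_{P_jQ_j}\big)$ from the elementary third-kind differentials $\tau_{PQ}$ and then normalizes the character afterwards by quoting a proposition (any character is a normalized one times an inessential one); you instead take an arbitrary third-kind differential with the prescribed residues and normalize it in one stroke via the $\mathbb R$-linear isomorphism $\Phi:H^0(M,\Omega^1_M)\to\mathbb R^{2g}$. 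For uniqueness, the paper invokes the corollary ``a normalized inessential character is trivial'' to conclude $h=\bm f_1/\bm f_2$ is single-valued and then constant, whereas you argue directly that $d\log h$ is holomorphic with purely imaginary periods and hence zero. These are the same facts viewed through different lenses: your argument is self-contained and makes the role of compactness (via the maximum principle) explicit, while the paper's is more modular, delegating the period analysis to the cited results from Farkas--Kra.
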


\begin{thm}
Every divisor $D$ with real coefficients of degree $\chi(M)$, the Euler characteristic of the Riemann surface $M$, is the divisor of a unique (up to a multiplicative constant) regular multiplicative differential defined on $\Sigma=M\setminus D$ and belonging to a unique normalized character.
\end{thm}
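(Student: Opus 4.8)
The plan is to derive the statement from the preceding theorem on regular multiplicative \emph{functions} by comparing the given divisor with the divisor of a fixed honest meromorphic $1$-form. Choose once and for all a non-zero meromorphic $1$-form $\omega_0$ on $M$; such a form exists on every compact Riemann surface (take $\omega_0=df$ for a non-constant meromorphic function $f$, whose existence follows from Riemann--Roch), and its divisor $D_0=(\omega_0)$ is an integral divisor whose degree equals that of the canonical class of $M$, hence, by the degree hypothesis on $D$, equals $\deg D=\chi(M)$. Consequently $E:=D-D_0$ is a divisor with real coefficients of degree zero. The whole point is that multiplication by the fixed form $\omega_0$ sets up a bijection, compatible with characters, between regular multiplicative \emph{functions} with divisor $E$ and regular multiplicative \emph{differentials} with divisor $D$, so that the present statement is equivalent to the one already proved.

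For existence, apply the preceding theorem on multiplicative functions to the degree-zero real divisor $E$ --- possibly after first re-expressing $E$ in the normal form $\prod_j P_j^{\alpha_j}/\prod_j Q_j^{\alpha_j}$ used there, which one obtains by partitioning the effective positive and negative parts of $E$ into matching pieces of equal mass. This produces a regular multiplicative function $\bm{h}$, unique up to a non-zero constant, belonging to a unique normalized character $\chi$, with $(\bm{h})=E$. Put $\bm{\omega}:=\bm{h}\,\omega_0$. Since $\omega_0$ is single-valued, continuing $\bm{\omega}$ around a loop $c$ multiplies it by $\chi(c)$, so $\bm{\omega}$ is a multiplicative differential belonging to the normalized character $\chi$. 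A purely local check gives $(\bm{\omega})=D$: away from $\mathrm{supp}\,D_0$ the order of $\bm{\omega}$ equals that of $\bm{h}$; at a point of $\mathrm{supp}\,D_0\setminus\mathrm{supp}\,D$ the order of $\bm{h}$ is the negative of the order of $\omega_0$, so $\bm{\omega}$ is holomorphic and non-vanishing there; and at a common point the two orders add up to the coefficient of $D$. Thus $\bm{\omega}$ is a non-vanishing holomorphic Prym differential on all of $\Sigma=M\setminus\mathrm{supp}\,D$ whose local expansion near each point of $D$ has the form $z^{\alpha}(c_0+c_1z+\cdots)\,dz$ with $c_0\neq 0$; that is, $\bm{\omega}$ is regular with divisor $D$.

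For uniqueness, let $\bm{\omega}_1$ and $\bm{\omega}_2$ be regular multiplicative differentials with divisor $D$ belonging to normalized characters $\chi_1$ and $\chi_2$. Then $\bm{f}:=\bm{\omega}_1/\bm{\omega}_2$ is a multiplicative function on $M$ with trivial divisor --- all zeros and poles cancel --- belonging to the normalized character $\chi_1\chi_2^{-1}$. Hence $\bm{f}$ is a nowhere-zero holomorphic multiplicative function on the whole of $M$ with normalized character, so $\abs{\bm{f}}$ is single-valued and $\log\abs{\bm{f}}$, being locally the real part of a holomorphic branch of $\log\bm{f}$, is harmonic on the compact surface $M$; therefore $\abs{\bm{f}}$ is a positive constant, and then $\bm{f}$ is itself constant by the open mapping theorem. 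It follows that $\bm{\omega}_1$ is a constant multiple of $\bm{\omega}_2$ and that $\chi_1=\chi_2$, which yields simultaneously the uniqueness up to a multiplicative constant and the uniqueness of the normalized character.

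I expect the hard part to be a matter of care rather than of ideas: one must check that the divisor of $\bm{h}\,\omega_0$ is \emph{exactly} $D$ (so that the cancellations along $\mathrm{supp}\,D_0\cap\mathrm{supp}\,D$ produce the right real coefficients while the points of $\mathrm{supp}\,D_0\setminus\mathrm{supp}\,D$ become genuine regular points), and that the preceding theorem really applies to $E$ --- including the degenerate case of the trivial divisor occurring in the uniqueness step, which is why that case is handled directly, through the sub-lemma that a regular multiplicative function on a compact surface with trivial divisor and normalized character must be constant. Once these verifications are in place, the statement follows from the multiplicative-function theorem with no further input.
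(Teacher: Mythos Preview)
Your approach is essentially the paper's: both prove existence by dividing $D$ by a canonical divisor $Z=(\bm{\omega_0})$ to obtain a degree-zero real divisor, invoke Theorem~1.0.7 to produce a regular multiplicative function with that divisor and normalized character, and then multiply by $\bm{\omega_0}$; the paper's extra splitting $D/Z=\mathcal{U}\cdot\mathcal{V}$ is just one particular way of putting $D/Z$ into the normal form required by Theorem~1.0.7, exactly the ``partitioning into matching pieces of equal mass'' you describe.

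For uniqueness the paper likewise forms $h=\bm{\omega_1}/\bm{\omega_2}$, but instead of your direct maximum-principle argument via harmonicity of $\log|h|$, it uses the \emph{definition} of a regular Prym differential to write $h=(f/\tilde f)\cdot(\bm{\omega_0}/\widetilde{\bm{\omega_0}})$ as a product of a ratio of regular multiplicative functions and a ratio of meromorphic differentials, checks that $(h)=1$, and then applies the lemma that a normalized inessential character is trivial to conclude $h$ is single-valued and hence constant. Your harmonic shortcut is valid and arguably cleaner, but the one place you should tighten is the assertion that $\bm f=\bm{\omega_1}/\bm{\omega_2}$ extends to a nowhere-zero holomorphic function across the points of $D$: ``all zeros and poles cancel'' is the right intuition, yet making it rigorous is precisely where the regularity hypothesis enters, and the paper carries out that verification explicitly.
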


\begin{thm}
Let $\bm{F}$ be a developing map of parabolic metric $g$ representing the divisor $D=\prod_jP_j^{(\alpha_j-1)}$.  Then there always exist local coordinate $x$ near a conical singularity such that $\bm{F}$ could be expressed locally as
\begin{eqnarray*}
  \bm{F}(x) &=& C_0x^{\alpha_j}+C_1\,\,\,\,\,\,\,\,\,\,if\,\,\,\,\alpha_j\neq0; \\
  \bm{F}(x) &=& C_0^{\prime}\log{x}+C_1^{\prime} \,\,\,\,\,\,\,\,\,\,if\,\,\,\,\alpha_j=0,
\end{eqnarray*}
where $C_0$ and $C_1$ are constants with $C_0\neq0$.
\end{thm}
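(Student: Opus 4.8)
The plan is to analyze the developing map $\bm{F}$ near a single conical singularity $P_j$ by working in a punctured coordinate disk and extracting the local monodromy. First I would fix a local coordinate $z$ near $P_j$ with $z(P_j)=0$, so that a punctured neighborhood of $P_j$ is biholomorphic to a punctured disk $\Delta^* = \{0 < |z| < \epsilon\}$, whose fundamental group is infinite cyclic generated by a small loop $\gamma$ around the puncture. Since $\bm{F}$ is a multi-valued locally univalent meromorphic function on $\Sigma$, its restriction to $\Delta^*$ is determined by a single branch together with the deck transformation induced by analytic continuation along $\gamma$. Because $g = \bm{F}^*(|dz|^2)$ and $g$ is a single-valued metric on $\Sigma$, continuation along $\gamma$ must send $\bm{F}$ to another developing map of the same metric; two developing maps of a parabolic metric differ by an isometry of $(\mathbb{C}, |dz|^2)$, i.e. by a map $w \mapsto aw + b$ with $|a| = 1$. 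Hence there are constants $a \in \mathbb{C}$, $|a|=1$, and $b \in \mathbb{C}$ such that continuing $\bm{F}$ once around $\gamma$ yields $a\bm{F} + b$.

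Next I would split into the two cases dictated by whether $a = 1$ or $a \neq 1$, and I expect this dichotomy to correspond exactly to $\alpha_j = 0$ versus $\alpha_j \neq 0$. If $a \neq 1$, then $\bm{F} - \frac{b}{1-a}$ picks up the factor $a$ under monodromy, so writing $a = e^{2\pi i \alpha_j}$ for a suitable real $\alpha_j$ (this is where the angle enters), the function $(\bm{F} - \frac{b}{1-a}) \cdot z^{-\alpha_j}$ is single-valued on $\Delta^*$ and meromorphic; analyzing its behavior at $0$ using the fact that $\bm{F}$ is locally univalent (so the metric $g = |\bm{F}'|^2 |dz|^2$ has the prescribed conical order $\alpha_j - 1$ at $P_j$, forcing $|\bm{F}'| \sim \mathrm{const}\cdot |z|^{\alpha_j - 1}$) shows this single-valued function extends holomorphically and non-vanishingly across $0$. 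A holomorphic change of coordinate $x = x(z)$ absorbing this unit then gives $\bm{F}(x) = C_0 x^{\alpha_j} + C_1$ with $C_0 \neq 0$. If instead $a = 1$, the monodromy is the pure translation $\bm{F} \mapsto \bm{F} + b$; if $b = 0$ then $\bm{F}$ is single-valued and locally univalent near $P_j$, which would force the conical order to be an integer and (after checking the Gauss–Bonnet/divisor bookkeeping) corresponds to a smooth point rather than a genuine $\alpha_j = 0$ singularity, so the relevant subcase is $b \neq 0$. Then $\bm{F} - \frac{b}{2\pi i}\log z$ is single-valued on $\Delta^*$; examining its singularity at $0$ via the conical order condition (now $|\bm{F}'| \sim \mathrm{const}\cdot|z|^{-1}$, matching $\alpha_j = 0$) shows it extends holomorphically, and again a holomorphic coordinate change produces $\bm{F}(x) = C_0'\log x + C_1'$.

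The key technical point common to both cases is the removable-singularity argument: after stripping off the explicit multivalued factor ($z^{\alpha_j}$ or $\log z$), one must show the remaining single-valued meromorphic function on $\Delta^*$ is actually holomorphic and non-zero at the puncture. This I would handle by combining two constraints — the growth rate of $|\bm{F}'|$ forced by $g$ representing the divisor with coefficient $\alpha_j - 1$ at $P_j$, and the local univalence of $\bm{F}$, which rules out branching of the remaining factor at $0$. I expect this removable-singularity / non-vanishing step to be the main obstacle, since it requires translating the metric-theoretic "conical of angle $2\pi\alpha_j$" condition into a precise asymptotic on the candidate holomorphic factor and then excluding the possibility of a zero or pole there; the monodromy classification itself is comparatively routine once one invokes that isometries of $\mathbb{C}$ are the affine maps $w \mapsto aw+b$ with $|a|=1$.
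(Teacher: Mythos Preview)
Your approach is considerably more direct than the paper's. The paper first computes the Schwarzian $\{\bm F,z\}$ near $P_j$, recasts the problem as a second-order linear ODE with a regular singular point, runs the Frobenius method to produce two fundamental solutions, and only afterwards uses the $\mathrm{Iso}(\mathbb C,|dz|^2)$ monodromy together with the conical form of the metric to determine which M\"obius transformation relates $\bm F$ to $x^{\alpha_j}$ or $\log x$. You bypass Schwarzians and Frobenius entirely by exploiting from the outset that the local monodromy is affine, $\bm F\mapsto a\bm F+b$ with $|a|=1$. For non-integer $\alpha_j$ and for $\alpha_j=0$ your outline is essentially correct and noticeably shorter than the paper's route.

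The genuine gap is the case $\alpha_j\in\mathbb Z\setminus\{0\}$. Your claim that the dichotomy $a=1$ versus $a\neq1$ ``corresponds exactly to $\alpha_j=0$ versus $\alpha_j\neq0$'' is false: whenever $\alpha_j$ is a nonzero integer one has $a=e^{2\pi i\alpha_j}=1$. In the subcase $a=1$, $b=0$ you assert that $\bm F$ is ``single-valued and locally univalent near $P_j$'' and dismiss it as a smooth point, but local univalence is only given on $\Sigma=M\setminus\{P_j,\ldots\}$; for a positive integer $\alpha_j>1$ the map $\bm F$ is single-valued with a genuine critical point at $P_j$, and this must be treated (it is easy once recognized: integrate $\bm F'=z^{\alpha_j-1}K(z)$ termwise and change coordinates). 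The more serious issue is $\alpha_j$ a \emph{negative} integer. Then $\bm F'$ is single-valued with a pole of order $1-\alpha_j\geq 2$ at $P_j$, and nothing in your growth/removable-singularity argument forces $\mathrm{res}_{P_j}\bm F'=0$. If that residue is nonzero the monodromy is a nontrivial translation ($a=1$, $b\neq0$) and $\bm F$ carries an irremovable $\log z$ term, so it \emph{cannot} be written as $C_0x^{\alpha_j}+C_1$ in any local coordinate (that expression is single-valued for integer $\alpha_j$). Eliminating this possibility is precisely the content of the paper's Subcase~2.2 in the preparation theorem, where a comparison of residues of $d\bm F$ computed in two different coordinate charts is used to derive a contradiction; your proposal contains no mechanism that plays this role.
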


Finally, after proving the main geometrical theorem, we give concrete expressions of parabolic metrics on the two-sphere $\mathbb{S}^2$ as an example.  And the developing map in this case will generalize the well-known Schwarz-Christoffel formula.

\section{Preparations for proofs}

\subsection{Integrating a multiplicative differential along arcs}
Let $\gamma:[0,1]\rightarrow M$ be a piecewise smooth arc.  Suppose a multiplicative differential $\bm{\omega}$ can be defined on the image of $\gamma$.  So there exists triple $(\omega_0,U_0,z_0)$ such that $\gamma(0)\in U_0$, where $(\omega_0,U_0,z_0)$ is a branch of $\bm{\omega}$.  Since $\bm{\omega}$ can be defined on the compact image of $\gamma$, we can find a continuation from $(\omega_0,U_0,z_0)$ as a finite chain of triples
\[\{(\omega_j,U_j,z_j)\}_{j=1}^N,\]
with
\[\gamma([0,1])\subset\bigcup_{j=1}^N U_j.\]

Divide the interval into $0=t_0<t_1<\cdot\cdot\cdot<t_N=1$, where $t_{j+1}$ is in the intersection of $U_j$ and $U_{j+1}$.  Let
\[\int_{\gamma} \omega\triangleq\sum_{j=0}^{N-1} \int_{t_j}^{t_{j+1}}\omega_j(\gamma(t))\gamma'(t)dt,\]
where the integral at the right hand side is the usual Lebesgue's integration.  Note that subdivision is required at the discontinuous point of $\gamma$.
\begin{rem}
By
\[\omega_1(\zeta(z))d\zeta=\omega(z)dz,\]
the value of the integral in the definition can be calculated as
\[\omega_1(\zeta(\gamma(t)))\gamma'(t)dt\]
\[=\omega_1(\zeta(\gamma(t)))d(\zeta(\gamma(t)))\]
\[=\omega(z(\gamma(t)))d(z(\gamma(t)))\]
\[=\omega(z(\gamma(t)))\gamma'(t)dt.\]
Therefore, the value of integral is independent of the choice of coordinate.  And a moment thought shows that the integral is also independent of the choice of points of division, namely, $0=t_0<t_1<\cdot\cdot\cdot<t_N=1$.
\end{rem}
\begin{rem}
A prescribed triple $(\omega_0,U_0,z_0)$ is required before doing the integration.
\end{rem}

\subsection{Definition of divisors with real coefficients}
The following definitions are also quoted from \cite{riesurf}.  But we generalize these definitions to divisors with coefficients in $\Real$.

\begin{defn}
A \emph{divisor} on $\Sigma$, which need not to be compact, is a formal symbol
\[\mathcal{U}=P^{\alpha_1}_1P^{\alpha_2}_2\cdot\cdot\cdot P^{\alpha_k}_k,\]
with $P_j\in \Sigma$, $\alpha_j\in \Real$.  It also always be written as
\[\mathcal{U}=\prod_{P\in \Sigma}P^{\alpha(P)},\]
with $\alpha(P)\neq0$ for only finitely many $P\in \Sigma$.
\end{defn}
The free commutative group is call the \emph{group of divisors} on $\Sigma$ and we denote it by $Div(\Sigma)$.  The group operations are defined as following.  For another divisor
\[\mathcal{B}=\prod_{P\in \Sigma}P^{\beta(P)},\]
we write
\begin{eqnarray*}
  \mathcal{UB} &=& \prod_{P\in \Sigma}P^{\alpha(P)+\beta(P)} \\
  \mathcal{U}^{-1} &=& \prod_{P\in \Sigma}P^{-\alpha(P)}.
\end{eqnarray*}
And we write the unit element in $Div{\Sigma}$ as $1$.

And for $\mathcal{U}\in Div(\Sigma)$ given above, we define
\[deg\,\,\mathcal{U}=\sum_{P\in \Sigma}\alpha(P).\]

\subsection{Definitions for the Riemann surfaces $\Sigma=M\setminus D$}

In our context, the Riemann surface that we consider is $\Sigma=M\setminus D$, where $D$ is the divisor $\prod_{j=1}^nP_j^{(\alpha_j-1)}$.  In this paper, we only consider a particular class of multiplicative functions having \emph{regular singularities} at $D$, meaning the single-valued function $\frac{d\bm{f}}{\bm{f}}$ has simple poles at every point of $D$.
The divisor of $\bm{f}$, denoted by $(\bm{f})$, is defined to be
\[(\bm{f})=\prod_{P\in M}P^{\alpha(P)},\]
\[\alpha(P)=res_P \frac{d\bm{f}}{\bm{f}}.\]
Similarly, we only focus on a particular class of multiplicative differentials on $M\setminus D$ giving the divisor $D$.  This is a little complicated, and we list the definition and some explanations to clarify the idea of this definition.
\begin{defn}
We call a Prym differential $\bm{\omega}$ is \emph{regular} if it is defined on $\Sigma=M\setminus D$, where $M$ is a compact Riemann surface and $D$ is a divisor, and there exists a multiplicative differential $\bm{\omega_0}$ on $M$, which only has zeros or poles at $D$, such that $\frac{\bm{\omega}}{\bm{\omega_0}}$ is a regular multiplicative function on $\Sigma$.
\end{defn}
Note that the existence of $\bm{\omega_0}$ is guaranteed by a corollary in the page 131 of \cite{riesurf}, saying that every divisor $D$ with integer coefficients of degree $2g-2$ is the divisor of a unique (up to a multiplicative constant) multiplicative differential belonging to a unique normalized character.

Now, we give some explanations about this definition.  Suppose that $\bm{\omega}=f(z)dz$ near $P_j$.  Our main problem is to find a way to define the residue of $\frac{df}{f}$ at $P_j$ for $\bm{\omega}=fdz$ and to reflect the regularity of $\bm{\omega}$ at $P_j$.

First, we only need that there exists an $\bm{\omega_0}$, while this may not cause any problem since every multiplicative differential only having zeros or poles at $D$ could do the same work.  To define the divisor from $\bm{\omega}$ at $P_j$, we first assume that $\bm{\omega}$ locally has the form $fdz$ near $P_j$ to get the basic idea of our definition of the divisor of $\bm{\omega}$.

Now, near $P_j$, $\bm{\omega}=fdz$ and $\bm{\omega_0}=gdz$.  Then, we have
\[\frac{\bm{\omega}}{\bm{\omega_0}}=\frac{f}{g}=h,\]
\[\frac{dh}{h}=\frac{df}{f}-\frac{dg}{g}.\]
Since $g$ is meromorphic, the residue of $g$ at $P_j$ is an integer.  What we want to do is to give the definition of $res_{P_j}\frac{df}{f}$ by
\[res_{P_j}\frac{df}{f}=res_{P_j}\frac{dh}{h}+res_{P_j}\frac{dg}{g}.\]

By our definition, if $\bm{\omega}$ is regular, then $\bm{f}=\frac{\bm{\omega}}{\bm{\omega_0}}$ is a regular multiplicative function for some $\bm{\omega_0}$.  We define the divisor of $\bm{\omega}$ to be
\[(\bm{\omega})=(\bm{\omega_0})\prod_{P\in M}P^{\alpha(P)},\]
where $\alpha(P)=res_P\frac{d\bm{f}}{\bm{f}}$, that is
\[(\bm{\omega})=(\bm{\omega_0})(\bm{f}).\]
Note that $(\bm{\omega_0})$ is defined in the usual sense as in \cite{riesurf}, i.e. by using residues since it has local expressions near singularities.

We need to check that $(\bm{\omega})$ is well-defined.  If there is another multiplicative differential, say $\bm{\omega_1}$, defining the regularity of $\bm{\omega}$, then $\frac{\bm{\omega}}{\bm{\omega_1}}=\bm{g}$ for some $\bm{g}$, a regular multiplicative function.  By the relation
\[\dfrac{d(\frac{\bm{f}}{\bm{g}})}{\frac{\bm{f}}{\bm{g}}}=\frac{d\bm{f}}{\bm{f}}-\frac{d\bm{g}}{\bm{g}},\]
we have
\[(\frac{\bm{f}}{\bm{g}})=\dfrac{(\bm{f})}{(\bm{g})}.\]
Also, by
\[\dfrac{d(\bm{fg})}{\bm{fg}}=\frac{d\bm{f}}{\bm{f}}+\frac{d\bm{g}}{\bm{g}},\]
\[(\bm{fg})=(\bm{f})(\bm{g}).\]
Therefore, we arrive at
\begin{eqnarray*}
\dfrac{(\bm{\omega_0})(\bm{f})}{(\bm{\omega_1})(\bm{g})}&=&\biggl(\dfrac{\bm{\omega_0}}{\bm{\omega_1}}\biggl)\cdot\biggl(\dfrac{\bm{f}}{\bm{g}}\biggl)  \\
&=&\biggl(\dfrac{\bm{\omega_0}}{\bm{\omega_1}}\cdot\dfrac{\bm{f}}{\bm{g}}\biggl)  \\
&=&\biggl(\dfrac{\bm{\omega_0}}{\bm{\omega}}\cdot\dfrac{\bm{\omega}}{\bm{\omega_1}}\cdot\dfrac{\bm{f}}{\bm{g}}\biggl)  \\
&=&\biggl(\dfrac{\bm{\omega_0}}{\bm{\omega}}\biggl)\cdot (\bm{f})\biggl(\dfrac{\bm{\omega}}{\bm{\omega_1}}\biggl)\cdot (\bm{g})^{-1}=1.
\end{eqnarray*}

Thus, $(\bm{\omega})$ is well-defined.

\section{Proofs of main theorems}
The following lemma is a specialization of a theorem in the page 52 of \cite{riesurf}.
\begin{lem}
Let $M$ be a compact Riemann surface.  $P$ and $Q$ are two arbitrary distinct points on $M$.  Then there exists a meromorphic differential on $M$ denoted by $\tau_{PQ}$ such that $P$ and $Q$ are simples poles for $\tau_{PQ}$ and
\[ord_P \tau_{PQ}=ord_Q \tau_{PQ}=-1;\]
\[res_P\tau_{PQ}=1;\]
\[res_Q\tau_{PQ}=-1.\]
\end{lem}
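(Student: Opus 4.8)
The plan is to obtain $\tau_{PQ}$ as the abelian differential of the third kind with the prescribed simple poles, and to get it by the standard Hodge-theoretic / finite-dimensionality argument rather than by an explicit construction. First I would recall the Riemann--Roch count for meromorphic differentials on the compact Riemann surface $M$ of genus $g$. Fix a local coordinate $z$ centered at $P$ and a local coordinate $w$ centered at $Q$, with the coordinate patches disjoint. I want a meromorphic differential $\tau$ that, near $P$, looks like $\big(\tfrac{1}{z}+\text{holomorphic}\big)\,dz$ and, near $Q$, looks like $\big(-\tfrac{1}{w}+\text{holomorphic}\big)\,dw$, and is holomorphic everywhere else; such a differential automatically satisfies $\mathrm{ord}_P\tau = \mathrm{ord}_Q\tau = -1$, $\mathrm{res}_P\tau = 1$, $\mathrm{res}_Q\tau = -1$.

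The key step is existence. Consider the space $\Omega(P+Q)$ of meromorphic differentials with at worst simple poles at $P$ and $Q$ and holomorphic elsewhere. Taking residues at $P$ gives a linear map $\mathrm{res}_P : \Omega(P+Q) \to \Complex$ whose kernel is the space $\Omega(1)$ of holomorphic differentials, of dimension $g$. By Riemann--Roch (equivalently, by the Mittag--Leffler-type theorem for differentials on a compact Riemann surface — this is exactly the theorem on page 52 of \cite{riesurf} being specialized), the dimension of $\Omega(P+Q)$ is $g+1$: the only obstruction to prescribing the two principal parts $\tfrac{1}{z}\,dz$ at $P$ and $-\tfrac{1}{w}\,dw$ at $Q$ is that the sum of the two residues must vanish, which here is $1 + (-1) = 0$, so the obstruction is met. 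Hence $\mathrm{res}_P : \Omega(P+Q)\to\Complex$ is surjective, and there is a differential $\tau_{PQ}\in\Omega(P+Q)$ with $\mathrm{res}_P\tau_{PQ} = 1$; by the residue theorem on the compact surface $M$, $\mathrm{res}_P\tau_{PQ} + \mathrm{res}_Q\tau_{PQ} = 0$, forcing $\mathrm{res}_Q\tau_{PQ} = -1$. In particular both residues are nonzero, so $\mathrm{ord}_P\tau_{PQ} = \mathrm{ord}_Q\tau_{PQ} = -1$ exactly, as required.

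I expect the only real subtlety to be the justification that the vanishing of the residue sum is the \emph{sole} obstruction — i.e. that $\dim\Omega(P+Q) = g+1$ rather than $g$. This is precisely where one invokes the cited theorem from \cite{riesurf} (or, equivalently, Serre duality / Riemann--Roch applied to the divisor $P+Q$): the relevant cohomological term $H^1(M,\mathcal{O}(P+Q))$ has the same dimension as the space of holomorphic differentials vanishing at $P$ and $Q$, and since $\deg(P+Q) = 2 > 0$ a generic holomorphic differential does not vanish at both points unless $g$ drops, but the bookkeeping is cleanest stated as: the residue functional is the unique linear relation. Given that input the lemma follows immediately, so I would simply cite page 52 of \cite{riesurf} for the existence of a third-kind differential with an arbitrary pair of simple poles and opposite residues $+1, -1$, and then read off the order and residue conditions.
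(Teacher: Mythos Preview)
Your proposal is correct and aligns with the paper's treatment: the paper does not give an independent proof of this lemma but simply states it as a specialization of the theorem on page~52 of \cite{riesurf}, which is exactly the citation you arrive at. You go a bit further by sketching the Riemann--Roch count showing $\dim\Omega(P+Q)=g+1$ and invoking the residue theorem, which is the standard argument behind that cited result; the brief digression into $H^1(M,\mathcal{O}(P+Q))$ is slightly muddled (the clean computation is $\ell(K+P+Q)-\ell(-P-Q)=2g-g+1$ with $\ell(-P-Q)=0$), but you yourself flag this and fall back on the correct statement, so nothing is actually wrong.
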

\begin{defn}
If $\bm{f}$ is a multiplicative function without zeros and poles, then $\frac{d\bm{f}}{\bm{f}}=d(\log{\bm{f}})$ is a single-valued holomorphic differential.  The character $\chi$ of $\bm{f}$ is called \emph{inessential} and $\bm{f}$ is called a \emph{unit}.
\end{defn}
The following proposition and corollary are stated in page 130 of \cite{riesurf} in the context of divisors with integer coefficients.  However, the proof of these statements have nothing to do with the coefficients of divisors.  Thus, we could apply them in our situation.
\begin{prop}
If $\chi$ is an arbitrary character, then there exists a unique inessential character $\chi_1$ such that $\chi/\chi_1$ is normalized.
\end{prop}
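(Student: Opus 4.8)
The plan is to identify the inessential characters with a real vector space of holomorphic differentials and to reduce the assertion to a period computation. First I would strip off the modulus: using the group isomorphism $\mathbb{C}^{*}\cong S^{1}\times\Real^{+}$, $z\mapsto(z/\abs{z},\abs{z})$, every character factors as $\chi=\nu\cdot\rho$, where $\nu(c)=\chi(c)/\abs{\chi(c)}$ is normalized and $\rho(c)=\abs{\chi(c)}$ is a homomorphism $H_{1}(\Sigma)\to\Real^{+}$. Since $\chi/\chi_{1}$ is normalized precisely when $\abs{\chi_{1}(c)}=\abs{\chi(c)}=\rho(c)$ for every $c$, the Proposition is equivalent to: \emph{there is one and only one inessential character whose modulus equals the given positive character $\rho$}. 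So the entire proof reduces to understanding the assignment ``inessential character $\mapsto$ its modulus''.

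Next I would parametrize the inessential characters. A unit $\bm{f}$ has trivial divisor, so $\omega_{\bm{f}}:=d\bm{f}/\bm{f}$ has all residues zero and therefore extends to a holomorphic $1$-form on the compact surface $M$; conversely $\exp\bigl(\int\omega\bigr)$ is a unit for every $\omega\in H^{0}(M,\Omega^{1})$. Hence the inessential characters are exactly those of the form $c\mapsto\exp\bigl(\oint_{c}\omega\bigr)$ with $\omega\in H^{0}(M,\Omega^{1})$, and the modulus of such a character on $c$ equals $\exp\bigl(\RE\oint_{c}\omega\bigr)$. Writing $\rho=\exp(\ell)$ with $\ell=\log\rho\in\mathrm{Hom}(H_{1}(\Sigma),\Real)$, the task becomes to produce $\omega\in H^{0}(M,\Omega^{1})$ with $\bigl[\RE\omega\bigr]=\ell$, unique up to a differential all of whose periods lie in $2\pi i\,\mathbb{Z}$ (which leaves the associated character unchanged).

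For the existence of such an $\omega$ I would invoke Hodge theory on $M$: $\RE\omega$ is a harmonic $1$-form, the assignment $\omega\mapsto[\RE\omega]$ is a map $H^{0}(M,\Omega^{1})\to H^{1}(M;\Real)$ between finite-dimensional real vector spaces of equal dimension, and it is injective, since a holomorphic differential with all real periods zero has $\RE\omega=df$ for a global---hence constant---function on $M$, so $\RE\omega\equiv0$, and a holomorphic $1$-form with vanishing real part vanishes identically. Therefore the assignment is an isomorphism, giving a unique $\omega$ with $[\RE\omega]=\ell$; exponentiating its (multivalued) primitive yields a unit whose character has modulus $\rho$, so $\chi_{1}$ exists. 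For uniqueness, if $\chi_{1}$ and $\chi_{1}'$ are inessential with the same modulus, then $\chi_{1}/\chi_{1}'$ is inessential and normalized, hence arises from an $\omega$ with $\RE\oint_{c}\omega=0$ for all $c$; by the injectivity just used $\omega=0$, so $\chi_{1}=\chi_{1}'$.

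I expect the one genuinely analytic point---and hence the main obstacle---to be the surjectivity in the existence step: producing a holomorphic differential with prescribed real periods, which is exactly where compactness of $M$ and the Dirichlet/Hodge argument are indispensable. A secondary bookkeeping point, when one works on $\Sigma=M\setminus D$ rather than on $M$ itself, is the observation already used above---that a unit has trivial divisor, so $\omega_{\bm{f}}$ is regular at every point of $D$ and both the inessential characters and the characters to which the Proposition is applied are pulled back from $\pi_{1}(M)$---after which the modulus splitting goes through verbatim and, as remarked before the statement, never sees whether the coefficients of $D$ are integral or real.
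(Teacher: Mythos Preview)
The paper does not prove this proposition; it simply quotes Farkas--Kra, p.~130, with the remark that the argument there is insensitive to whether divisor coefficients are integral or real. Your proof---factor off the modulus, identify inessential characters with $c\mapsto\exp\bigl(\oint_c\omega\bigr)$ for $\omega\in H^0(M,\Omega^1)$, and invoke the real-period isomorphism $H^0(M,\Omega^1)\xrightarrow{\ \sim\ }H^1(M;\Real)$---is exactly the Farkas--Kra argument (phrased via Hodge theory rather than an explicit period-matrix computation, but resting on the same Riemann bilinear relations), so it is correct and there is nothing further to compare.

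Two small remarks on points that do not affect correctness. First, the ambiguity you flag---``unique up to a differential all of whose periods lie in $2\pi i\,\mathbb{Z}$''---is in fact empty: such a differential has all real periods zero, so by your own injectivity step it vanishes, and the correspondence between inessential characters and holomorphic $1$-forms on $M$ is a genuine bijection. Second, in your final paragraph the claim that the characters to which the paper later applies the corollary are pulled back from $\pi_1(M)$ is not literally true (a small loop around $P_j$ is sent to $e^{2\pi i\alpha_j}\neq 1$ in general); what \emph{is} true, and what suffices for your reduction, is that their \emph{moduli} are trivial on such loops and hence descend to $H_1(M)$.
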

\begin{cor}
A normalized inessential character is trivial.
\end{cor}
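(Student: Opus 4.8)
The plan is to derive the Corollary directly from the uniqueness half of the preceding Proposition, with essentially no extra work. First I would point out that the trivial character is inessential: the constant function $\bm f\equiv 1$ has no zeros and no poles, so it is a unit, and the character to which it belongs is trivial. Now suppose $\chi$ is a character that is at once normalized and inessential. Then there are (at least) two inessential characters $\chi_1$ for which $\chi/\chi_1$ is normalized: on the one hand $\chi_1=\chi$ works, because $\chi$ is inessential by hypothesis and $\chi/\chi=1$ is trivially normalized; on the other hand $\chi_1=1$ works, because $1$ is inessential and $\chi/1=\chi$ is normalized by hypothesis. Since the Proposition guarantees that such a $\chi_1$ is unique, we conclude $\chi=1$. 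This is the entire argument.

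For orientation I would also record the more hands-on proof, in which the real content actually resides. If $\chi$ is inessential, fix a unit $\bm f$ belonging to $\chi$; then $\omega:=d\log\bm f=\tfrac{d\bm f}{\bm f}$ is a single-valued holomorphic differential with no singularities (a unit has neither zeros nor poles), and, after choosing a base point and a branch of $\log\bm f$, one has $\chi(c)=\exp\!\big(\int_c\omega\big)$ for every closed curve $c$. If $\chi$ is moreover normalized, then $\RE\int_c\omega=0$, i.e. $\int_c\RE\omega=0$, for all $c$; thus the harmonic real $1$-form $\RE\omega$ has all periods zero and is therefore exact, $\RE\omega=dh$ with $h$ harmonic on the compact surface, hence constant, so $\RE\omega\equiv 0$. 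A holomorphic form of type $(1,0)$ with identically vanishing real part is zero, so $\omega\equiv 0$, $\bm f$ is constant, and $\chi$ is trivial.

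I do not expect any genuine obstacle: in the first route everything nontrivial has been absorbed into the Proposition, and in the second route the only point needing a word of care is that $\RE\omega$ is globally defined and harmonic on the whole compact surface, which follows from units being free of zeros and poles. I would present the first, one-line argument as the proof.
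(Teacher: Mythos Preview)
Your first argument---applying the uniqueness clause of the Proposition with the two candidates $\chi_1=\chi$ and $\chi_1=1$---is correct and is exactly the standard derivation; the paper itself does not prove the Corollary but merely cites Farkas--Kra, p.~130, where this is precisely the argument given. Your hands-on alternative via the vanishing of $\RE\omega$ is also sound and is in fact the content hidden inside the proof of the Proposition.
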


Note that a divisor of a single-valued meromorphic differential on $M$ is also called a \emph{canonical divisor}.

\subsection{Proof of theorem 1.0.7}
\begin{proof}
\emph{Step1.The uniqueness.}\\
Let
\[D=P_1^{\alpha_1}\cdot\cdot\cdot P_r^{\alpha_r}/Q_1^{\alpha_1}\cdot\cdot\cdot Q_r^{\alpha_r}\]
with $P_j\neq Q_k$ for all $j, k=1,\ldots, r>0$.

If $f_j$ is a regular multiplicative function belonging to the normalized character $\chi_j$ $(j=1,2)$, and $(f_j)=D$.

We only need to prove a claim that: \emph{$h=\frac{f_1}{f_2}$ could be extended to a multiplicative function without zeros and poles on $M$}.  First, by a direct computation, 
\[\frac{dh}{h}=\frac{df_1}{f_1}-\frac{df_2}{f_2},\]
which is obviously meromorphic at $P_j$'s since $\frac{df_1}{f_1}$ and $\frac{df_2}{f_2}$ are both meromorphic at $P_j$.  Moreover,
\[res_{P_j}\frac{dh}{h}=res_{P_j}\frac{df_1}{f_1}-res_{P_j}\frac{df_2}{f_2}=\alpha_j-\alpha_j=0.\]
So every $P_j$ is neither zeros or poles for $h$; otherwise, $res_{P_j}\frac{dh}{h}$ should be a nonzero integer.
Thus, $\chi_1\chi_2^{-1}$ is inessential and normalized; hence trivial.  Moreover, since $f_1$ and $f_2$ are regular, $\frac{dh}{h}$ has no pole, and hence holomorphic (regularity is used here).  Then $\frac{dh}{h}$ is a constant.  So $h$ is holomorphic and single-valued on $M$ and hence a complex constant.

\emph{Step2.The existence.}\\
It suffices to prove the existence for the divisor $D=P^{\alpha}/Q^{\alpha}$, with $P,Q\in M$, $P\neq Q$.  Recall the normalized meromorphic differential $\tau_{PQ}$.  Define
\[f(P)=\exp{\int_{P_0}^{P}\alpha\tau_{PQ}}.\]
Then $\frac{df}{f}=\alpha\tau_{PQ}$ and the residues are $\alpha$ at $P$, $-\alpha$ at $Q$, and both $P$ and $Q$ are simple poles of $\frac{df}{f}$.  Hence $f$ is a regular multiplicative function with divisor $D$.

The character to which $f$ belongs is not necessarily normalized.  But proposition 3.0.3 shows how to get around this obstacle since divisor of a meromorphic function without zeros and poles is the identity.
\end{proof}

Note that the regularity is used to prove uniqueness.

\subsection{Proof of the theorem 1.0.8}
\begin{proof}
\emph{Step1. The existence.}\\
Let
\[D=P_1^{\alpha(P_1)}\cdot\cdot\cdot P_{s-1}^{\alpha(P_{s-1})}\cdot P_s^{\alpha(P_s)},\]
with $\sum_{k=1}^s \alpha(P_k)=2g-2$.  Then we have $\alpha(P_s)=2g-2-\sum_{k=1}^{s-1} \alpha(P_k)$ and the divisor turns to be
\[D=P_1^{\alpha(P_1)}\cdot\cdot\cdot P_{s-1}^{\alpha(P_{s-1})}\cdot P_s^{2g-2-\sum_{k=1}^{s-1} \alpha(P_k)}\]
\[=\frac{P_1^{\alpha(P_1)}}{P_s^{\alpha(P_1)}}\cdot\cdot\cdot\frac{P_{s-1}^{\alpha(P_{s-1})}}{P_s^{\alpha(P_{s-1})}}\cdot P_s^{2g-2}\]
\[=\mathcal{U}\cdot P_s^{2g-2},\]
where $\mathcal{U}=\frac{P_1^{\alpha(P_1)}}{P_s^{\alpha(P_1)}}\cdot\cdot\cdot\frac{P_{s-1}^{\alpha(P_{s-1})}}{P_s^{\alpha(P_{s-1})}}$ has the form described in our theorem.

Now, let $Z$ be a canonical divisor, so $deg Z=2g-2$.  Consider $\frac{D}{Z}$, which has the form
\[\frac{D}{Z}=\mathcal{U}\cdot\frac{P_s^{2g-2}}{Z}\]
\[=\mathcal{U}\cdot\mathcal{V}.\]

Next, apply our theorem of regular multiplicative functions to divisors $\mathcal{U}$ and $\mathcal{V}$ respectively, then there are two regular multiplicative functions $\bm{f_1}$ and $\bm{f_2}$ with normalized character such that
\[(\bm{f_1}\cdot\bm{f_2})=\dfrac{D}{Z}.\]
Note that $\bm{f_1}\cdot\bm{f_2}$ is also a regular multiplicative function belonging to a normalized character.

Since $Z$ is a canonical divisor, there is a single-valued meromorphic differential $\bm{\omega_0}$ such that
\[(\bm{\omega_0})=Z.\]
Let
\[\bm{\omega}=\bm{f_1}\cdot\bm{f_2}\cdot\bm{\omega_0},\]
and we can make its character normalized if necessary.  Looking at the local expression of $\bm{\omega}$ near every singular point, it is clear that $\bm{\omega}$ is regular.

\emph{Step2. The uniqueness.}\\
Let $\bm{\omega}$ and $\bm{\widetilde{\omega}}$ be two regular Prym differentials, which are defined on $\Sigma$ and both represent the divisor $D$. The normalized characters of these two Prym differentials are $\chi_1$ and $\chi_2$.  By definition,
\[\dfrac{\bm{\omega}}{\bm{\omega_0}}=f;\]
\[\dfrac{\bm{\widetilde{\omega}}}{\bm{\widetilde{\omega_0}}}=\tilde{f},\]
where $f$ and $\tilde{f}$ are regular multiplicative functions on $\Sigma$; $\bm{\omega_0}$ and $\bm{\widetilde{\omega_0}}$ are two multiplicative differentials with only zeros or poles at the points of $D$.
Take
\[h=\dfrac{\bm{\omega}}{\bm{\widetilde{\omega}}}\]
\[=\dfrac{f\cdot\bm{\omega_0}}{\tilde{f}\cdot\widetilde{\bm{\omega_0}}}=\dfrac{f}{\tilde{f}}\cdot\dfrac{\bm{\omega_0}}{\widetilde{\bm{\omega_0}}}.\]
We know that $\dfrac{f}{\tilde{f}}$ is a regular multiplicative function on $\Sigma$ and $\dfrac{\bm{\omega_0}}{\widetilde{\bm{\omega_0}}}$ is a regular multiplicative function on $M$.  Thus, $h$ is also a regular multiplicative function on $\Sigma$.  Moreover,
\[(h)=\biggl(\dfrac{f}{\tilde{f}}\biggl)\cdot\biggl(\dfrac{\bm{\omega_0}}{\widetilde{\bm{\omega_0}}}\biggl)\]
\[=\dfrac{(f)}{(\tilde{f})}\cdot\dfrac{(\bm{\omega_0})}{(\widetilde{\bm{\omega_0}})}=\dfrac{(\bm{\omega})}{(\widetilde{\bm{\omega}})}=1.\]
Therefore, $\dfrac{dh}{h}$ has residue zero at every point on $M$.  Thus $h$ could be viewed as a holomorphic multiplicative function defined on $M$ without zeros and poles.  Thus, $\chi_1\chi_2^{-1}$ is inessential and normalized; hence trivial.  And by its regularity, $h$ can be extended to a holomorphic single-valued function on $M$, meaning that $h$ is a constant.  Then complete the proof for uniqueness.
\end{proof}

\section{Developing maps}
In the paper \cite{Binxu}, the author has stated a lemma (lemma 2.1) claiming the existence for developing maps of conical metrics with constant curvature one on a compact Riemann surface as following.
\begin{lemma}
Let $g$ be a conformal metric on a compact Riemann surface $M$ of constant curvature one and representing the divisor $D=\prod_{j=1}^nP_j^{(\alpha_j-1)}$ with $\alpha_j>0$.  Then there exists a multi-valued locally univalent holomorphic map from $\Sigma=M\setminus D$ to the Riemann sphere $\overline{\mathbb{C}}$ such that the monodromy of $\bm{F}$ belongs to $PSU(2)$ and
\[g=\bm{F}^*g_{st}.\]
Recall that $g_{st}=\frac{4\abs{dw}^2}{(1+\abs{w}^2)^2}$ is the standard metric over $\overline{\mathbb{C}}$.
\end{lemma}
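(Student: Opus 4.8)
The plan is to build $\bm F$ first on small simply connected patches of $\Sigma$ directly from Liouville's equation, and then to patch these local pieces together on the universal cover, using the rigidity of developing maps of a fixed metric to control the monodromy.

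First I would work on a simply connected coordinate disc $U\subset\Sigma$ with coordinate $z$ and write $g=e^{2u}\abs{dz}^2$ (with $u$ necessarily real-analytic, by the curvature-one equation). Because $g$ has curvature one, $u$ satisfies a Liouville equation $4u_{z\bar z}=-e^{2u}$, and a short computation then shows that the locally defined function $\psi:=u_{zz}-u_z^{2}$ is holomorphic on $U$. I would pick two linearly independent solutions $y_1,y_2$ of the linear ODE $y''+\psi\,y=0$, whose Wronskian $W=y_1y_2'-y_1'y_2$ is a nonzero constant, and set $f:=y_1/y_2$. One checks that $f$ is meromorphic with $f'=-W/y_2^{2}$, so $f$ has no critical points and only simple poles (at the necessarily simple zeros of $y_2$); hence $f\colon U\to\overline{\mathbb C}$ is locally univalent, and $f^{*}g_{st}=\bigl(2\abs{W}/(\abs{y_1}^{2}+\abs{y_2}^{2})\bigr)^{2}\abs{dz}^{2}$, a curvature-one metric with the same associated holomorphic datum $\psi$. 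By the classical local theory of Liouville's equation — a curvature-one conformal metric is locally determined by its Schwarzian together with the $0$- and $1$-jet of $u$ at one point, and post-composing $f$ by $PSL(2,\mathbb C)$ adjusts precisely this jet while preserving the Schwarzian — one can arrange $g=f^{*}g_{st}$ on $U$. This local step is essentially the construction of \cite{Bryant 1988}, \cite{UY00}, \cite{Er04} recalled in \cite{Binxu}.

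The second ingredient I would use is rigidity: if $f$ and $\tilde f$ are locally univalent meromorphic maps on a connected open set with $f^{*}g_{st}=g=\tilde f^{*}g_{st}$, then on a small set where $f$ is biholomorphic onto its image the composition $h:=\tilde f\circ f^{-1}$ is holomorphic and satisfies $h^{*}g_{st}=g_{st}$; being a holomorphic local isometry of the round sphere it is the restriction of an element of $PSU(2)=\{M\in PSL(2,\mathbb C):M^{*}g_{st}=g_{st}\}$. So any two local developing maps of $g$ differ by post-composition with a constant element of $PSU(2)$; in particular the analytic continuation of a developing-map germ along a path in $\Sigma$ is unique once the germ is fixed at the initial point.

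Finally I would globalize. Pulling $g$ back along the universal covering $p\colon\widetilde\Sigma\to\Sigma$ gives a curvature-one metric $p^{*}g$; fixing a base point and a local developing-map germ of $p^{*}g$ there, the two steps above continue this germ uniquely along every path, and since $\widetilde\Sigma$ is simply connected the monodromy theorem produces a single-valued locally univalent meromorphic map $\widetilde{\bm F}\colon\widetilde\Sigma\to\overline{\mathbb C}$ with $p^{*}g=\widetilde{\bm F}^{*}g_{st}$. For each deck transformation $\sigma$ the map $\widetilde{\bm F}\circ\sigma$ is again a developing map of $\sigma^{*}p^{*}g=p^{*}g$, so by rigidity $\widetilde{\bm F}\circ\sigma=\rho(\sigma)\circ\widetilde{\bm F}$ for a unique $\rho(\sigma)\in PSU(2)$, and $\sigma\mapsto\rho(\sigma)$ is a homomorphism $\pi_1(\Sigma)\to PSU(2)$. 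Hence $\widetilde{\bm F}$ descends to the desired multi-valued locally univalent meromorphic $\bm F$ on $\Sigma$ with monodromy in $PSU(2)$ and $g=\bm F^{*}g_{st}$. The hard part, I expect, will be the local step: extracting from the \emph{real} function $u$ a developing map whose pull-back metric is exactly $g$ (not merely some curvature-one metric with the same Schwarzian) and checking that it is genuinely locally univalent, including where it takes the value $\infty$ — this is Liouville's classical analysis, and once it is in hand the rigidity and globalization steps are formal. (The hypothesis $\alpha_j>0$ is used only to guarantee that $g$ is an honest conical metric on $M$; it plays no further role in producing $\bm F$ on $\Sigma$.)
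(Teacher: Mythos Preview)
The paper does not actually prove this lemma: it is quoted verbatim as Lemma~2.1 of \cite{Binxu} and is used as a black box, with the remark that the same argument yields developing maps in the parabolic case with monodromy in $Iso(\Complex,\abs{dz}^2)$. So there is no ``paper's own proof'' to compare against beyond the citation.

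That said, your sketch is correct and is exactly the standard argument one finds in \cite{Binxu} (and going back to \cite{Bryant 1988}, \cite{UY00}, \cite{Er04}): extract the holomorphic Schwarzian datum $\psi=u_{zz}-u_z^{2}$ from the Liouville equation, realise a local developing map as a ratio of solutions of $y''+\psi y=0$, use the $PSU(2)$-rigidity of local isometries of $g_{st}$ to show any two such maps differ by a constant M\"obius rotation, and globalise on the universal cover to get the monodromy representation into $PSU(2)$. Your identification of the one genuinely delicate point --- that post-composing by $PSL(2,\Complex)$ lets you match not just the Schwarzian but the actual metric $g$ on the nose, including at poles of $f$ --- is accurate; this is precisely Liouville's classical local normal form for curvature-one metrics, and once it is in hand the rest is formal, as you say.
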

And from the Remark 2.1 in \cite{Binxu}, we know that developing maps also exist for hyperbolic conformal metrics with finite conical or cusp singularities.  While looking at the proof of the above lemma in \cite{Binxu}, we could conclude that the developing maps for parabolic metrics in our situation of arbitrary real angles also exist and they have monodromies in $Iso(\Complex, \abs{dz}^2)$.

As in \cite{Binxu}, a multi-valued locally univalent meromorphic function $\bm{h}$ on $\Sigma$ is said to be \emph{projective} if any two function elements $h_1$, $h_2$ of $\bm{h}$ near a point $P\in \Sigma$ are related by a fractional linear transformation $T\in PGL(2,\mathbb{C}))$, $h_2=T\circ h_1$.

The lemma 3.2 in \cite{Binxu} also give local expressions of developing maps near conical singularities as following.
\begin{lemma}
Let $\bm{F}:\Sigma\rightarrow\overline{\mathbb{C}}$ be a projective multi-valued locally univalent meromorphic function, and the monodromy of $\bm{F}$ belongs to a maximal compact subgroup of $PGL(2,\mathbb{C})$.  If $\bm{F}$ is compatible with the divisor $D=\prod_jP_j^{(\alpha_j-1)}$,
then there exists a neighborhood $U_j$ of $P_j$ with complex coordinate $z$ and $T_j\in PGL(2,\mathbb{C})$ such that $z(P_j)=0$ and $g_j=T_j\circ\bm{F}$ has the form
\[g_j(z)=z^{\alpha_j},\]
where $0<\alpha_j\neq1$.  Moreover, there exists $T\in PGL(2,\mathbb{C})$ such that the pullback $(T\circ\bm{F})^*g_{st}$ of the standard metric $g_{st}$ by $T\circ\bm{F}$ is a conformal metric of constant curvature one, which represents the divisor $D=\prod_jP_j^{(\alpha_j-1)}$.  In particular, if the monodromy of $\bm{F}$ belongs to $PSU(2)$, then the fractional linear transformation $T$ turns out to be the identity map.
\end{lemma}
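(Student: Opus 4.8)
The plan is to reduce the study of $\bm{F}$ near each $P_j$ to a monodromy normal form and then to absorb the remaining ambiguity into a holomorphic change of coordinate. Since every maximal compact subgroup of $PGL(2,\Complex)$ is conjugate to $PSU(2)$, I would first fix $T\in PGL(2,\Complex)$ conjugating the monodromy of $\bm{F}$ into $PSU(2)$; replacing $\bm{F}$ by $T\circ\bm{F}$ alters neither the branch points nor the local exponents, so it is harmless for the first assertion and is exactly the transformation required in the ``moreover'' part. Every nontrivial element of $PSU(2)$ is elliptic, so the local monodromy $M_j$ around $P_j$ is either trivial or conjugate to a diagonal map $w\mapsto\lambda_j w$ with $\abs{\lambda_j}=1$. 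I would choose $T_j\in PGL(2,\Complex)$ making $M_j$ diagonal; since $g_j=T_j\circ\bm{F}$ then converges at $P_j$ to a fixed point of this map, namely $0$ or $\infty$, I post-compose by $w\mapsto1/w$ if necessary so that $g_j\to0$. Writing $\lambda_j=e^{2\pi i\alpha_j}$ and using the compatibility of $\bm{F}$ with $D$ to identify the parameter $\alpha_j$ (cone angle $2\pi\alpha_j$), the branch $g_j$ acquires the factor $\lambda_j$ around the loop.

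Second, I would extract the local model. Because $z^{\alpha_j}$ carries exactly the same monodromy factor $\lambda_j$ around $P_j$, the quotient
\[h(z)=\frac{g_j(z)}{z^{\alpha_j}}\]
is single-valued and meromorphic on the punctured disc $0<\abs{z}<\eps$ (this includes the integer case $\lambda_j=1$, where $g_j$ is itself single-valued). The heart of the argument is to show that $h$ extends holomorphically across $0$ with $h(0)\neq0$. Here I would combine two inputs: local univalence of $\bm{F}$, giving $g_j'\neq0$ on the punctured disc, and compatibility with $D$, which forces the pulled-back length element to have a genuine cone of angle $2\pi\alpha_j$; as $g_j\to0$ one has $1+\abs{g_j}^2\to1$, so the cone condition says $\abs{g_j'}$ is comparable to $\abs{z}^{\alpha_j-1}$ up to positive bounded factors. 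Writing $g_j'=z^{\alpha_j-1}\varphi$ with $\varphi=\alpha_j h+zh'$, this makes $\abs{\varphi}$ bounded above and bounded away from zero near $0$, so $\varphi$ has a removable singularity with $\varphi(0)\neq0$. Integrating $g_j'=z^{\alpha_j-1}\varphi$ term by term (legitimate since $\alpha_j+k>0$ for all $k\ge0$) yields $g_j=C+z^{\alpha_j}\widetilde{h}$ with $\widetilde{h}$ holomorphic and $\widetilde{h}(0)\neq0$; single-valuedness of $h$ for non-integer $\alpha_j$, together with $g_j\to0$ for integer $\alpha_j$, forces $C=0$, so $h=\widetilde{h}$ is holomorphic and nonvanishing at $0$.

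Third, I would absorb $h$ into the coordinate. On the simply connected disc $h$ is holomorphic and nonzero, so $h^{1/\alpha_j}=\exp\bigl(\tfrac1{\alpha_j}\log h\bigr)$ is single-valued, holomorphic and nonvanishing, and $\zeta=z\,h(z)^{1/\alpha_j}$ satisfies $\zeta(P_j)=0$ and $\zeta'(P_j)\neq0$, hence is a legitimate local coordinate. In this coordinate $\zeta^{\alpha_j}=z^{\alpha_j}h=g_j$, which is the desired normal form $g_j(z)=z^{\alpha_j}$ after renaming $\zeta$ as $z$; the inequalities $0<\alpha_j\neq1$ hold because the normalization $g_j\to0$ forces a positive exponent, while $\alpha_j=1$ would describe a smooth point not carried by $D$.

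Finally, for the ``moreover'' statement I would use that $PSU(2)$ is precisely the orientation-preserving isometry group of $g_{st}$. Once $T\circ\bm{F}$ has monodromy in $PSU(2)$, the pullback $(T\circ\bm{F})^*g_{st}$ is single-valued on $\Sigma$; it is a conformal metric of constant curvature one off the $P_j$ because $T\circ\bm{F}$ is a local biholomorphism and pullback by such a map preserves constant curvature, and by the local form $z^{\alpha_j}$ just established it has cone angle $2\pi\alpha_j$ at each $P_j$, i.e.\ it represents $D$. When the monodromy of $\bm{F}$ already lies in $PSU(2)$ there is nothing to conjugate, so $T$ may be taken to be the identity. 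I expect the middle step to be the main obstacle: controlling $h$ at the puncture, that is, ruling out a pole, a zero, or an essential singularity and securing $h(0)\neq0$, which is exactly where local univalence and the divisor-compatibility (cone-angle) hypothesis must be used together.
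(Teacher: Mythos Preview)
The paper does not actually prove this lemma; it is quoted verbatim from \cite{Binxu} (their Lemma~3.2) and used only as motivation. The closest thing to a ``paper's own proof'' is the parabolic analogue, Theorem~4.2.1, and that argument is quite different from yours. There the authors first establish (Proposition in Section~4.1) that the Schwarzian $\{\bm{F},x\}$ has a second-order pole with leading term $(1-\alpha_j^2)/(2x^2)$, write $\bm{F}=u_0/u_1$ with $u_0,u_1$ linearly independent solutions of the associated Fuchsian ODE $u''+\tfrac12\{\bm{F},x\}\,u=0$, and then run the Frobenius method on the indicial equation $s(s-1)+(1-\alpha_j^2)/4=0$. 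The monodromy constraint is imposed at the level of the $2\times2$ transition matrix for $(u_0,u_1)$, and the cases (non-integer exponent difference; integer difference with $R_m=0$; integer difference with $R_m\neq0$ and a logarithmic term; double root) are treated one by one, with the unwanted cases eliminated by matching against the allowed monodromy group. Your approach bypasses all of the ODE/Frobenius machinery in favor of a direct geometric normalization, which is lighter when it works.

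Two points in your outline need tightening, however. First, you assert early that ``$g_j$ converges at $P_j$ to a fixed point of this map'' and use this to justify $1+\abs{g_j}^2\to1$, but at that stage nothing yet excludes an essential singularity of $h=g_j/z^{\alpha_j}$; the convergence of $g_j$ is really a \emph{consequence} of the removability argument for $\varphi$ (or $h$), so the logical order should be reversed. Second, you interpret ``compatible with $D$'' as the metric cone condition $\abs{g_j'}\asymp\abs{z}^{\alpha_j-1}$, whereas in \cite{Binxu} and in this paper compatibility is formulated as a condition on the Schwarzian of $\bm{F}$. The two are equivalent in the end, but invoking the metric estimate before you have a metric is circular; you should either derive the growth bound from the Schwarzian hypothesis directly, or state explicitly that you are adopting the metric definition of compatibility as your starting hypothesis.
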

Now we prove another version of lemma 3.2 in \cite{Binxu} for parabolic metrics with conical angles belonging to any real numbers.
\subsection{The Differential Equation of Developing Maps}
\begin{proposition}
Suppose $g$ is a parabolic metric representing the divisor 
$D=\prod_jP_j^{(\alpha_j-1)}$ and $\bm{F}: \Sigma\rightarrow \Complex$ is a developing map of $g$.  Then the Schwarzian $\{\bm{F},x\}$ of $\bm{F}$ has the form
\[\{\bm{F},x\}=\dfrac{1-{\alpha_j}^2}{2x^2}+\frac{d_j}{x}+\psi_j(x),\]
where $x$ is a local complex coordinate near $P_j$ with $x(P_j)=0$ and $\psi_j(x)$ is holomorphic near $P_j$.  Moreover, $d_j$ and $\psi_j(x)$ depended on the choice of the local coordinate $x$.
\end{proposition}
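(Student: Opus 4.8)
The plan is to reduce everything to the local model by using Theorem 1.0.9, which tells us that near each conical singularity $P_j$ there is a local coordinate $x$ with $x(P_j)=0$ in which the developing map $\bm{F}$ takes the normal form $\bm{F}(x)=C_0x^{\alpha_j}+C_1$ (when $\alpha_j\neq 0$) or $\bm{F}(x)=C_0'\log x+C_1'$ (when $\alpha_j=0$), with $C_0\neq 0$. Since the Schwarzian derivative $\{\bm{F},x\}$ is invariant under post-composition of $\bm{F}$ with any Möbius transformation — in particular under the affine map $w\mapsto C_0w+C_1$ — we have $\{\bm{F},x\}=\{x^{\alpha_j},x\}$ in the first case and $\{\bm{F},x\}=\{\log x,x\}$ in the second. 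So the first step is to record this invariance and thereby replace $\bm{F}$ by its model.

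The second step is the direct computation of $\{x^{\alpha}, x\}$ and $\{\log x, x\}$. For $f(x)=x^{\alpha}$ one computes $f'/f=\alpha/x$, then $(f''/f')=(\alpha-1)/x$, and plugging into $\{f,x\}=(f''/f')'-\tfrac12(f''/f')^2$ gives $\{x^{\alpha},x\}=-(\alpha-1)/x^2-\tfrac12(\alpha-1)^2/x^2=\dfrac{1-\alpha^2}{2x^2}$. For $f(x)=\log x$ one has $f'=1/x$, $f''/f'=-1/x$, and the same formula yields $\{\log x,x\}=1/x^2-\tfrac12\cdot 1/x^2=\dfrac{1}{2x^2}$, which is exactly the $\alpha_j=0$ case of the claimed formula $\tfrac{1-\alpha_j^2}{2x^2}$. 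Thus in the special model coordinate $x$ provided by Theorem 1.0.9 we get $\{\bm{F},x\}=\tfrac{1-\alpha_j^2}{2x^2}$ on the nose, with $d_j=0$ and $\psi_j\equiv 0$.

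The third and final step is to pass to an arbitrary local coordinate. If $\widetilde x$ is any other complex coordinate near $P_j$ with $\widetilde x(P_j)=0$, then $\widetilde x=x\,\varphi(x)$ for some holomorphic $\varphi$ with $\varphi(0)\neq 0$, equivalently $x=\widetilde x\,\eta(\widetilde x)$ with $\eta$ holomorphic and $\eta(0)\neq 0$. Applying the cocycle (chain) rule for the Schwarzian,
\[
\{\bm{F},\widetilde x\}=\{\bm{F},x\}\left(\frac{dx}{d\widetilde x}\right)^{2}+\{x,\widetilde x\},
\]
and substituting $\{\bm{F},x\}=\tfrac{1-\alpha_j^2}{2x^2}$, one sees that the leading term $\tfrac{1-\alpha_j^2}{2x^2}(dx/d\widetilde x)^2$ produces $\tfrac{1-\alpha_j^2}{2\widetilde x^2}$ plus lower-order poles, the simple-pole part being absorbed into a constant $d_j$ and the remainder holomorphic; and $\{x,\widetilde x\}$ is holomorphic at $\widetilde x=0$ because $x$ as a function of $\widetilde x$ is a local biholomorphism there (a coordinate change has no singularity, so its Schwarzian is holomorphic). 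Collecting terms gives $\{\bm{F},\widetilde x\}=\tfrac{1-\alpha_j^2}{2\widetilde x^2}+\tfrac{d_j}{\widetilde x}+\psi_j(\widetilde x)$ with $\psi_j$ holomorphic, and the construction makes transparent that $d_j$ and $\psi_j$ depend on the chosen coordinate. The main (though mild) obstacle is the bookkeeping in this last step: verifying that the coordinate-change term $\{x,\widetilde x\}$ and the correction to the double-pole term contribute nothing worse than a simple pole, i.e. that the coefficient of $1/\widetilde x^{2}$ is unchanged and no higher-order pole appears. This is a finite Laurent-expansion check using $\eta(0)\neq 0$, and it is where one must be careful rather than clever.
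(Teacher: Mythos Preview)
Your computation is correct as far as it goes, but the argument is circular within the logical structure of the paper. Theorem~1.0.9 is proved in Section~4.3 by invoking the Preparation Theorem~4.2.1, and the proof of Theorem~4.2.1 opens with ``By the proposition about the Schwarzian $\{\bm{F},x\}$, \ldots'': the Schwarzian proposition is the \emph{input} to the Frobenius analysis that eventually yields the normal form $\bm{F}(x)=C_0x^{\alpha_j}+C_1$. So using Theorem~1.0.9 to prove the proposition closes a loop.

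The paper's proof avoids this dependence by working directly with the metric. One writes $g=e^{2u}\abs{dz}^2$ with $u=(\alpha_j-1)\log\abs{z}+u_1$, invokes the identity $\{\bm{F},z\}\,dz^2=2\bigl(\partial_z^2 u-(\partial_z u)^2\bigr)dz^2$ from \cite{Troyanov 1991} and \cite{Binxu}, and then uses the parabolic hypothesis $K=0$ to conclude $\triangle u_1=0$; the removable-singularity theorem for bounded harmonic functions then makes $u_1$ smooth at $z=0$, so $\partial_z u_1$ is holomorphic and the claimed Laurent expansion of $\{\bm{F},z\}$ drops out. This route is independent of Theorem~1.0.9, and indeed is what makes the later Frobenius argument possible.

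If you wish to salvage your approach, you would need an independent derivation of the local model for $\bm{F}$ near $P_j$ that does not pass through the Schwarzian --- essentially redoing Section~4.2 by other means --- which is considerably more work than the paper's direct argument. Your chain-rule step (the third step) is fine and is in fact implicit in the paper's remark that $d_j$ and $\psi_j$ depend on the coordinate.
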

\begin{proof}
Near $P_j$, we write
\[g=e^{2u}\abs{dz}^2,\]
where $z(P_j)=0$ and $u(z)=(\alpha_j-1)\log{\abs{z}}+u_1(z)$, in which $u_1(z)$ is continuous at $P_j$ and differentiable otherwise.  The curvature $K$ of $g$ is given by
\[K=-4e^{2u}\dfrac{\partial^2u}{\partial z\partial \bar{z}}.\]
In the paper \cite{Troyanov 1991}, Troyanov defines a \emph{projective connection} compatible with the divisor $D$ as
\[\eta(z)=2\biggl(\dfrac{\partial^2u}{\partial z^2}-\biggl(\dfrac{\partial u}{\partial z}\biggl)^2\biggl)dz^2.\]
Moreover, in the paper \cite{Binxu}, the authors have proved that
\[\{\bm{F},z\}dz^2=\eta(z),\]
where $\bm{F}$ is a developing map of a conformal metric of constant curvature one on a compact Riemann surface and
\[\eta(z)=\biggl(\dfrac{1-\alpha_j^2}{2z^2}+\frac{d_j}{z}+\psi_j\biggl)dz^2.\]
However, all of the above proofs in those papers have nothing to do with the angles and curvature until the step in \cite{Troyanov 1991} which is to prove the differentiability of the function $u_1(z)=u(z)-(\alpha_j-1)\log{\abs{z}}$ at $z=0$.  This is easy in the parabolic case since we know that
\[\dfrac{\partial^2u_1}{\partial z\partial \bar{z}}=-4K\abs{z}^{2(\alpha_j-1)}e^{2u_1},\]
which is stated in the page 301 of \cite{Troyanov 1991}.  By $K=0$, this equation is equivalent to
\[\triangle u_1=0.\]
Therefore, by the removability of singularity of harmonic function and that $u_1$ is continuous at $z=0$, we have $u_1$ is a $C^{\infty}$-function at $z=0$.  So, as in \cite{Troyanov 1991} and \cite{Binxu},
\[\{\bm{F},z\}=2\biggl(\dfrac{\partial^2u_1}{\partial z^2}-\biggl(\dfrac{\partial u_1}{\partial z}\biggl)^2\biggl)-2\frac{\alpha_j-1}{z}\cdot\frac{\partial u_1}{\partial z}-\dfrac{\alpha_j^2-1}{2z^2}.\]
Since
\[\dfrac{\partial}{\partial \bar{z}}\biggl(\dfrac{\partial u_1}{\partial z}\biggl)=0,\]
$\frac{\partial u_1}{\partial z}$ is holomorphic near $z=0$, and hence $\frac{1}{z}\cdot\frac{\partial u_1}{\partial z}$ has at most a simple pole at $z=0$.  Therefore,
\[\{\bm{F},z\}=\dfrac{1-{\alpha_j}^2}{2z^2}+\frac{d_j}{z}+\psi_j(z),\]
for some constant $d_j$ and $\psi_j$ holomorphic.
\end{proof}
\subsection{Preparation Theorem for Local Expressions of Developing Maps}
\begin{thm}
Let $\bm{F}: M\setminus\{P_1, \cdot\cdot\cdot, P_n\}\rightarrow\mathbb{C}$ be a projective multi-valued locally univalent meromorphic function, and the monodromy of $\bm{F}$ belong to $Iso(\mathbb{C},\abs{dz}^2)$.  If $\bm{F}$ is compatible with the divisor $D=\prod_jP_j^{(\alpha_j-1)}$, then there exists a neighborhood $U_j$ of $P_j$ with complex coordinate $x$ and $T_j\in PGL(2,\Complex)$ such that $x(P_j)=0$ and $g_j=T_j\circ\bm{F}$ has the form
\begin{eqnarray*}
  g_j(x)&=&x^{\alpha_j}\,\,\,\,\,if\,\,\alpha_j\neq0 \\
  g_j(x)&=&\log{x}\,\,\,\,\,if\,\,\alpha_j=0.
\end{eqnarray*}
\end{thm}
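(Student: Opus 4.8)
The plan is to start from the differential equation of the Schwarzian derived in Proposition 4.1.1. Since $\bm{F}$ is a developing map with monodromy in $Iso(\mathbb{C},\abs{dz}^2)$, it is in particular projective, so near $P_j$ we have
\[\{\bm{F},z\}=\dfrac{1-{\alpha_j}^2}{2z^2}+\frac{d_j}{z}+\psi_j(z),\]
with $\psi_j$ holomorphic. The key algebraic fact I would invoke is that the Schwarzian is invariant under postcomposition by fractional linear transformations, so $\{T_j\circ\bm{F},z\}=\{\bm{F},z\}$ for any $T_j\in PGL(2,\Complex)$, and that it transforms under a change of the source coordinate by the cocycle rule. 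Thus the strategy is: (i) show that one can always perform a holomorphic coordinate change $x=x(z)$ with $x(P_j)=0$, $x'(P_j)\neq0$, that kills the regular part $d_j/z+\psi_j(z)$, leaving exactly $\{\bm{F},x\}=\dfrac{1-{\alpha_j}^2}{2x^2}$; then (ii) solve this model Schwarzian ODE explicitly and identify the solution space modulo $PGL(2,\Complex)$.

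For step (i), I would write the change-of-variable formula $\{\bm{F},z\}=\{\bm{F},x\}\,(x')^2+\{x,z\}$ and set $\{\bm{F},x\}=\dfrac{1-\alpha_j^2}{2x^2}$ as the target; this becomes a Schwarzian equation $\{x,z\}=\dfrac{1-\alpha_j^2}{2z^2}+\dfrac{d_j}{z}+\psi_j(z)-\dfrac{1-\alpha_j^2}{2x^2}(x')^2$ for the unknown $x(z)$. Rather than solving this directly, the cleaner route is to reduce a Schwarzian equation to a second-order linear ODE: a function $\bm{F}$ with $\{\bm{F},z\}=2Q(z)$ is a ratio $y_1/y_2$ of two solutions of $y''+Q(z)y=0$. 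Here $Q(z)=\tfrac12\left(\dfrac{1-\alpha_j^2}{2z^2}+\dfrac{d_j}{z}+\psi_j(z)\right)$, so $z=0$ is a regular singular point with indicial equation $r(r-1)+\tfrac{1-\alpha_j^2}{4}=0$, giving exponents $r=\dfrac{1\pm\alpha_j}{2}$. Frobenius theory then gives a fundamental system whose ratio, after the standard normalization, is a locally univalent function; when $\alpha_j\notin\mathbb Z$ (the generic non-resonant case) this ratio is $z^{\alpha_j}\cdot(\text{unit})$, and when $\alpha_j=0$ the exponents coincide and a logarithm appears, giving $\log z + (\text{holomorphic})$ type behaviour. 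Absorbing the unit factor and the holomorphic correction into a new coordinate $x$ and the $PGL(2,\Complex)$ ambiguity into $T_j$ yields the claimed normal forms $g_j(x)=x^{\alpha_j}$ or $g_j(x)=\log x$.

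The main obstacle I anticipate is the \emph{resonant cases}: when $\alpha_j$ is a nonzero integer the two Frobenius exponents differ by an integer, so the second solution may (or may not) pick up a logarithmic term, and one must use the hypothesis that the monodromy lies in $Iso(\mathbb{C},\abs{dz}^2)$ — i.e. is of the form $w\mapsto aw+b$ with $\abs a=1$, in particular has no parabolic-with-fixed-point or loxodromic part that would force a logarithm — to rule out the logarithmic branch and conclude $g_j(x)=x^{\alpha_j}$ after all. Concretely, the local monodromy around $P_j$ of $x^{\alpha_j}$ is $w\mapsto e^{2\pi i\alpha_j}w$, a rotation (fixing $0$), whereas that of $\log x$ is $w\mapsto w+2\pi i$, a translation; both lie in $Iso(\mathbb{C},\abs{dz}^2)$, but a genuine logarithmic-resonant solution would have monodromy $w\mapsto e^{2\pi i\alpha_j}(w+c\log\text{-term})$ which is not an isometry unless the logarithmic coefficient vanishes. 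So the argument is: compute the local monodromy of $\bm F$ from the Frobenius data, impose that it be an element of $Iso(\mathbb{C},\abs{dz}^2)$, deduce the vanishing of the offending coefficient, and read off the normal form. I would also need to double-check the borderline $\alpha_j=0$ case separately since there the ODE $y''+Q y=0$ has a double indicial root and $\log x$ genuinely occurs; here one shows the monodromy is the translation $w\mapsto w+2\pi i\,C_0'$ for a suitable branch, which is indeed in $Iso(\mathbb{C},\abs{dz}^2)$, consistently with Theorem 1.0.9.
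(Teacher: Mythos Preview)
Your overall strategy coincides with the paper's: pass from the Schwarzian to the second--order linear ODE $u''+\tfrac12\bigl(\tfrac{1-\alpha_j^2}{2x^2}+\tfrac{d_j}{x}+\psi_j\bigr)u=0$, apply Frobenius at the regular singular point, and sort cases by the exponent difference $s_2-s_1=\alpha_j$. The non--integer case and the $\alpha_j=0$ case go through as you describe.

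The gap is in the resonant case $\alpha_j\in\mathbb Z\setminus\{0\}$ with $R_m\neq 0$. Your stated reason for excluding the logarithmic branch --- that the local monodromy ``$w\mapsto e^{2\pi i\alpha_j}(w+\text{log-term})$ is not an isometry'' --- fails exactly here, because $e^{2\pi i\alpha_j}=1$ when $\alpha_j$ is an integer, so the monodromy of the Frobenius ratio $x^{-|\alpha_j|}\psi - R_m\log x$ is the \emph{translation} $w\mapsto w-2\pi i R_m$, which lies in $Iso(\mathbb C,|dz|^2)$. More precisely, writing $\bm F=T_j^{-1}$ applied to this ratio with $T_j=\begin{pmatrix}a&b\\c&d\end{pmatrix}$, monodromy does rule out $c\neq 0$ (the conjugate of a translation by a M\"obius map with $c\neq 0$ is parabolic with a finite fixed point, hence not affine); but when $c=0$ the monodromy of $\bm F$ is again a translation and gives no obstruction.

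The paper closes this remaining case not by monodromy but by using ``compatible with $D$'' in its metric sense: $g=|d\bm F|^2$ must be of the form $\mu(\xi)^2|\xi|^{2(\alpha_j-1)}|d\xi|^2$ near $P_j$. With $c=0$ one has $\bm F'(x)=\tfrac{d}{a}\bigl(-|\alpha_j|x^{-|\alpha_j|-1}-R_m x^{-1}\bigr)$, and equating $|\bm F'(x)|^2|dx|^2$ with $\mu^2|\xi|^{2(\alpha_j-1)}|d\xi|^2$ forces a contradiction (an order comparison when $\alpha_j>0$, a residue comparison of the associated $1$--form when $\alpha_j<0$). Your sketch omits this step, and without it the logarithmic branch cannot be eliminated; you should add this metric argument (or an equivalent use of the compatibility hypothesis beyond the Schwarzian) to complete the resonant case.
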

\begin{proof}
By the proposition about the Schwarzian $\{\bm{F},x\}$, there is a neighborhood $U_j$ of $P_j$ and a local coordinate $x$ on $U_j$ such that $x(P_j)$ and
\[\{\bm{F},x\}=\dfrac{1-{\alpha_j}^2}{2x^2}+\frac{d_j}{x}+\psi_j(x),\]
where $\psi_j(x)$ is holomorphic near $P_j$.  Moreover, $d_j$ and $\psi_j(x)$ depend on the choice of the local coordinate $x$.  By the proposition in \cite{Yo87}, $\bm{F}$ has the form
\[\bm{F}=\dfrac{u_0(x)}{u_1(x)},\]
where $u_0(x)$ and $u_1(x)$ are two linearly independent solutions of the equation
\[\dfrac{d^2u}{dx^2}+\frac{1}{2}\biggl(\dfrac{1-{\alpha_j}^2}{2x^2}+\frac{d_j}{x}+\psi_j(x)\biggl)u=0.\]

Moreover, when $\bm{F}$ changes projectively, i.e.
\[\bm{F}\mapsto\dfrac{a\bm{F}+b}{c\bm{F}+d},\]
where $ad-bc=1$.  Then $u_0$ and $u_1$ would change as
\[\left(
  \begin{array}{c}
    u_0 \\
    u_1 \\
  \end{array}
\right)\mapsto
\left(
  \begin{array}{cc}
    d & c \\
    b & a \\
  \end{array}
\right)
\left(
  \begin{array}{c}
    u_0 \\
    u_1 \\
  \end{array}
\right),\]
and vice versa.

Note that the monodromy of $\bm{F}$ is in $Iso(\Complex, \abs{dz}^2)$, thus the solutions $u_0$ and $u_1$ could only transfer under the transformation of the form
\[\left(
    \begin{array}{cc}
      1 & 0 \\
      w & e^{i\theta} \\
    \end{array}
  \right),
\]
where $w\in \Complex$.
Now, we denote $L_j$ be the operator $x^2\frac{\partial^2}{\partial x^2}+q_j(x)$ with $q_j(x)=\frac{1}{2}(\frac{1-\alpha_j^2}{2}+d_jx+x^2\psi_j(x))$, and then $u_0$ and $u_1$ are both the solutions of the equation $L_ju=0$.  Moreover, the equation $L_ju=0$ has regular singularity at $0$, which allows us to quote the Frobenius method to solve it (cf \cite{Yo87}).

If
\[u(x)=x^s\sum_{j=0}^\infty c_jx^j,\,\,\,\,\,\,\,(c_0\equiv1)\]
is the solution of the equation $L_ju=0$ and
\[q_j(x)=\sum_{k=0}^{\infty} b_kx^k\]
be the power series expansion of $q_j(x)$ with $b_0=\frac{1-\alpha_j^2}{4}$.  Put
\[h(s)=s(s-1)+\dfrac{1-\alpha_j^2}{4},\]
and
\begin{eqnarray*}
  R_0 &=& 0, \\
  R_n &=& R_n(c_1, \cdot\cdot\cdot, c_{n-1},s)= \sum_{i=0}^{n-1}c_ib_{n-i},\,\,\,\,\,n>0.
\end{eqnarray*}
Then $L_ju=0$ if and only if we have the relations
\[h(s+n)c_n+R_n=0,\]
which we denote by $(\sharp)_n$, holds for all $n=0,1,2\ldots$.  Especially, the equation
\[h(s)=s(s-1)+\dfrac{1-\alpha_j^2}{4}=0\] is called the indicial equation of $L_ju=0$ at $x=0$.  This equation has two roots $s_1=\dfrac{1-\alpha_j}{2}$ and $s_2=\dfrac{1+\alpha_j}{2}$ with $s_2-s_1=\alpha_j$.

Now, $c_j$ is determined by the relations $(\sharp)_n$ and the parameter $s$, so we write $u(x)$ as
\[u(s,x)=x^s\sum_{j=0}^\infty c_j(s)x^j,\,\,\,\,\,\,\,(c_0\equiv1).\]
Suppose $s_{i_1}$ $(i_1=1\,\,or\,\,2)$ is the bigger one between $s_1$ and $s_2$.  So $h(s_{i_1}+n)\neq0$ for all $n\geq1$, then $c_n$ can be solved from $(\sharp)_n$ for every $n\geq 0$ and $u(s_{i_1},x)$ is a solution of the equation.  When consider the other root $s_{i_2}$ $(i_2\neq i_1\,\,\, and \,\,\,i_2=1\,\,or\,\,2)$, we need sperate into the following two cases.

\emph{Case 1}  If $s_2-s_1=\alpha_j$ is not an integer, then $h(s_{i_2}+n)\neq0$ for all $n\geq1$.  Thus, $u(s_{i_2}, x)$ is a solution linearly independent of $u(s_{i_1}, x)$.  Summing up, we have
\[u(s_{i_1}, x)=x^{s_{i_1}}(1+\varphi_{i_1}(x))\,\,\,\,\,and\,\,\,\,\,u(s_{i_2}, x)=x^{s_{i_2}}(1+\varphi_{i_2}(x)),\]
where $\varphi_{i_1}$ and $\varphi_{i_2}$ are holomorphic functions vanishing at $0$.  Then $u_0$ and $u_1$ are linear combinations of $u(s_{i_1}, x)$ and $u(s_{i_2}, x)$.  Therefore, $\bm{F}=\frac{u_0}{u_1}$ is a fractional transformation of $\frac{u(s_{i_1}, x)}{u(s_{i_2}, x)}$.  And hence $\bm{F}$ has the form
\[\bm{F}=\dfrac{Ax^{\alpha_j}\phi_j+B}{Cx^{\alpha_j}\phi_j+D},\]
where $A,B,C,D\in\Complex$ satisfying $AD-BC\neq0$ and $\phi_j(x)$ equal to $\frac{1+\varphi_{i_1}(x)}{1+\varphi_{i_2}(x)}$ or its inverse.  It is clear that $\phi_j(x)$ is holomorphic with $\phi_j(0)=1$.  By changing coordinates, we can just write
\[\bm{F}=\dfrac{Ax^{\alpha_j}+B}{Cx^{\alpha_j}+D}.\]

\emph{Case 2}  Suppose that $m\triangleq \abs{s_2-s_1}=\abs{\alpha_j}$ is a nonzero integer.

\emph{Subcase 2.1} If $R_m=0$, then the equation $(\sharp)_n$ for $s=s_{i_2}$ for all $n\geq1$ could be solved by choosing $c_m$ arbitrarily.  And then obtain the solution $u(s_{i_2}, x)$ which is linearly independent of $u(s_{i_1}, x)$.  Therefore, we turn to Case 1.

\emph{Subcase 2.2} If $R_m\neq0$, put
\[u^*=x^{s_{i_2}}\sum_{k=0}^\infty c_k(s_{i_2})x^k,\]
where $c_0=1$, and $c_j$'s $(j\geq1\,\,and \,\,j\neq m)$ are determined by $(\sharp)_j$ and $c_m$ is chosen arbitrarily.  Then
\[U_0(x)\triangleq h'(s_{i_1})u^*-R_m\frac{\partial}{\partial s}u(s, x)\arrowvert_{s=s_{i_1}}\]
is a solution.  Note that $c_j$'s are holomorphic with respect to $s$ since $h$ and $R_n$ both make it.  Thus, the two linearly independent solutions of $L_ju=0$, say $\left(
      \begin{array}{c}
        U_0(x) \\
        u(s_{i_1}, x) \\
      \end{array}
    \right)$, is given by
\[
\left(
  \begin{array}{cc}
    x^{s_{i_2}} & -R_mx^{s_{i_1}}\log{x} \\
    0 & x^{s_{i_1}} \\
  \end{array}
\right)\cdot
\left(
  \begin{array}{c}
    h'(s_{i_1})\sum_{k=0}^{\infty} c_k(s_{i_2})x^k-R_mx^m\sum_{k=0}^{\infty}c_k'(s_{i_1})x^k \\
    \sum_{k=0}^{\infty}c_k(s_{i_1})x^k \\
  \end{array}
\right).
\]
After a local rotation around $x=0$, 
$\left(
      \begin{array}{c}
        U_0(x) \\
        u(s_{i_1}, x) \\
      \end{array}
    \right)$
turns to be 
\[
\left(
  \begin{array}{cc}
    e^{2\pi \sqrt{-1}s_{i_2}} & -R_me^{2\pi\sqrt{-1}s_{i_1}}2\pi\sqrt{-1} \\
    0 & e^{2\pi\sqrt{-1}s_{i_1}} \\
  \end{array}
\right)\cdot
\left(
  \begin{array}{c}
        U_0(x) \\
        u(s_{i_1}, x) \\
      \end{array}
    \right),
\]
equivalent to
\[e^{2\pi\sqrt{-1}s_{i_1}}
\left(
  \begin{array}{cc}
    1 & -R_m2\pi\sqrt{-1} \\
    0 & 1 \\
  \end{array}
\right)\cdot
\left(
  \begin{array}{c}
        U_0(x) \\
        u(s_{i_1}, x) \\
      \end{array}
    \right),
\]
(by direct calculation and using $s_{i_1}-s_{i_2}=m$ is an integer).

Since the linearly independent solutions $u_0$ are $u_1$ are linear combinations of $U_0(x)$ and $u(s_{i_1}, x)$, there is a fractional transformation such that
\begin{eqnarray*}
  \frac{a\bm{F}+b}{c\bm{F}+d} &=& \frac{U_0(x)}{u(s_{i_1},x)} \\
   &=& x^{s_{i_2}-s_{i_1}}\cdot\frac{\psi_{i_2}}{\psi_{i_1}}-R_m\log{x} \\
   &=& x^{-\abs{\alpha_j}}\cdot\psi-R_m\log{x},
\end{eqnarray*}
where 
\[\psi_{i_2}=h'(s_{i_1})\sum_{k=0}^{\infty} c_k(s_{i_2})x^k-R_mx^m\sum_{k=0}^{\infty}c_k'(s_{i_1})x^k,\]
\[\psi_{i_1}=\sum_{k=0}^{\infty}c_k(s_{i_1})x^k,\]
and $\psi=\frac{\psi_{i_2}}{\psi_{i_1}}$.

$ad-bc\neq0$ because $u_0$ and $u_1$ are linearly independent, and by multiplying a constant we could assume $ad-bc=1$.  Since $s_1\neq s_2$, we have $h'(s_{i_1})\neq0$, and then $\psi(0)\neq0$.

Now, \[\bm{F}=\frac{d(x^{-\abs{\alpha_j}}\psi-R_m\log{x})-b}{-c(x^{-\abs{\alpha_j}}\psi-R_m\log{x})+a}.\]
By the fact that the monodromy of $\bm{F}$ is in $Iso(\mathbb{C},\abs{dz}^2)$, we have
\[e^{i\theta_0}\frac{d(x^{-\abs{\alpha_j}}\psi+\log{x})-b}{-c(x^{-\abs{\alpha_j}}\psi+\log{x})+a}+w_0
=\frac{d(x^{-\abs{\alpha_j}}\psi+\log{x}+2\pi i)-b}{-c(x^{-\abs{\alpha_j}}\psi+\log{x}+2\pi i)+a}.\]
Expand this equation and let $x$ tend to zero, it turns out that
\[c(de^{i\theta_0}-cw_0)\psi(0)^2=cd\psi(0)^2.\]
Since $\psi(0)\neq0$, this is
\[c(de^{i\theta_0}-cw_0)=cd.\]
If $c=0$, then
\[\bm{F}=\frac{d}{a}(x^{-\abs{\alpha_j}}-R_m\log{x})-\frac{b}{a},\]
and
\[\abs{\bm{F}'(x)}=\abs{\frac{d}{a}(-\abs{\alpha_j}x^{-\abs{\alpha_j}-1}-\frac{R_m}{x})}.\]
Since the parabolic metric can be obtained by
\[g=\bm{F}_*(\abs{dz}^2)=\abs{\bm{F}'(x)}^2\abs{dx}^2,\]
where $z$ is the standard complex coordinate on $\Complex$, we have
\[g=\abs{\frac{d}{a}(-\abs{\alpha_j}x^{-\abs{\alpha_j}-1}-\frac{R_m}{x})}^2\abs{dx}^2.\]

By the definition of $g$, there is a local coordinate $\xi$ centered at $P_j$, under which $g$ has the form
\[g=\mu^2(\xi)\abs{\xi}^{2(\alpha_j-1)}\abs{d\xi}^2,\]
where $\mu^2(\xi)=e^{2u_1}$ is smooth with $\triangle u_1=0$ as we have proved.  However, since every harmonic function could be the real part of a holomorphic function, there exists a holomorphic function $V(\xi)$ such that $Re(V)=u_1$, and so $\mu^2(\xi)=e^{2u_1}=\abs{e^{2V}}$.  Now, we can write
\begin{eqnarray*}
  g &=& \abs{\frac{d}{a}(-\abs{\alpha_j}x(\xi)^{-\abs{\alpha_j}-1}-\frac{R_m}{x(\xi)})}^2\abs{x'(\xi)}^2\abs{d\xi}^2 \\
    &=& \mu^2(\xi)\abs{\xi}^{2(\alpha_j-1)}\abs{d\xi}^2,
\end{eqnarray*}
where $x(\xi)$ is also a biholomorphic changing of coordinates and $\mu(\xi)\neq0$.  And $x$ tends to zero as $\xi$ tends to zero and vice versa.  Moreover, $x'(\xi)\neq0$ since $x(\xi)$ is biholomorphic.
This equation equals to
\[\abs{\dfrac{x'(\xi)(\abs{\alpha_j}x(\xi)^{-\abs{\alpha_j}-1}+R_mx(\xi)^{-1})}{e^{V(\xi)}\xi^{\alpha_j-1}}}^2=\abs{\frac{a}{d}}.\]
We denote $Q(\xi)$ as
\[Q(\xi)=\dfrac{x'(\xi)(\abs{\alpha_j}x(\xi)^{-\abs{\alpha_j}-1}+R_mx(\xi)^{-1})}{e^{V(\xi)}\xi^{\alpha_j-1}}.\]
Since it is holomorphic in a sector having vertex at $\xi=0$, $Q(\xi)$ is an open map if it is not a constant map.  Then the norm of $Q(\xi)$ cannot be constant $\frac{a}{d}$ on an open neighborhood in this sector if it is not a constant map.  Thus, $Q(\xi)=\frac{a}{d}e^{i\bar{\theta}}$ for some $\bar{\theta}$ fixed in one sector.  Moreover, on the overlapping parts of different sectors, $Q(\xi)$ must be the same constant.  So, then, in the punctured disk centered at $\xi=0$,
\[\frac{d}{a}\cdot\frac{\xi}{x}\cdot\dfrac{x'(\xi)(\abs{\alpha_j}+R_mx(\xi)^{\abs{\alpha_j}})}{e^{V(\xi)}e^{i\bar{\theta}}}=\xi^{\alpha_j}\cdot x^{\abs{\alpha_j}}.\]
This impossible for $\alpha_j>0$ since $\xi^{\alpha_j}\cdot x^{\alpha_j}=0$ at $x=\xi=0$ but the left hand side is 
\[\frac{d}{a}\alpha_j\cdot e^{-V(0)}e^{-i\bar{\theta}},\]
which is nonzero.

If $\alpha_j<0$, consider
\[\omega(x)=(\abs{\alpha_j}x^{-\abs{\alpha_j}-1}+\frac{R_m}{x})dx\]
and
\[\omega(\xi)=\frac{a}{d}e^{V(\xi)}e^{i\bar{\theta}}\xi^{\alpha_j-1}d\xi,\]
the same meromorphic differential under different coordinate charts in a punctured disk.  However, this cannot be true since they have different residues at $x=\xi=0$ for 
\[res_{x=0}\omega(x)=2\pi\sqrt{-1}R_m\neq 0,\]
and
\[res_{\xi=0}\omega(\xi)=0.\]

Therefore, $c\neq0$.  And we have
\[de^{i\theta_0}-d=cw_0.\]

From the local monodromy of $\frac{U_0(x)}{u(s_{i_1},x)}$, the fractional transformation of local monodromy for $\bm{F}$ is given by the matrix
\[e^{2\pi\sqrt{-1}s_{i_1}}
\left(
\begin{array}{ccc}
 d & -b \\

 -c & a
\end{array}
\right)
\left(
\begin{array}{ccc}
 1 & -2R_m\pi\sqrt{-1} \\

 0 & 1
\end{array}
\right),
\]
which is
\[e^{2\pi\sqrt{-1}s_{i_1}}
\left(
\begin{array}{ccc}
 d & -b-dR_m2\pi\sqrt{-1} \\

 -c & a+cR_m2\pi\sqrt{-1}
\end{array}
\right).
\]
And the local monodromy is also given by 
\[\left(
\begin{array}{ccc}
 e^{i\theta_0} & w_0 \\

 0 & 1
\end{array}
\right).\]
Since these two matrices give the same fractional transformation, then
\[ce^{i\theta_0}=0.\]
By $c\neq0$, $e^{i\theta_0}=0$, a contradiction.  That is, we rule out Subcase 2.2.






\emph{Case 3}  If $s_1=s_2$, then $s_1$ is a double root of $h$, and thus $h'(s_1)=0$.  Since $L_j$ does not depend on $s$ and so $L_j$ and $\frac{\partial}{\partial s}$ are commutative, the following expression
\[\frac{\partial}{\partial s}u(s,x)\biggl{\arrowvert}_{s=s_1}=u(s_1,x)\log{x}+x^{s_1}\sum_{j\geq0}c_j'(s_1)x^j\]
is a solution of $L_ju=0$.
Therefore,
\[\dfrac{a\bm{F}+b}{c\bm{F}+d}=\dfrac{\frac{\partial}{\partial s}u(s,x)\arrowvert_{s=s_1}}{u(s_1,x)}.\]
And then,
\[\dfrac{a\bm{F}+b}{c\bm{F}+d}=\log{x}+\phi(x),\]
where $\phi(x)=\dfrac{\sum_{j\geq0}c_j'(s_1)x^j}{\sum_{j\geq0}c_j(s_1)x^j}$ is holomorphic near $x=0$ and $\phi(0)=0$ since $c_0=1$.  Changing the coordinate by $y=xe^{\phi(x)}$, we could also write as
\[\dfrac{a\bm{F}+b}{c\bm{F}+d}=\log{y}.\]
\end{proof}

\subsection{Proof of theorem 1.0.9}
\begin{proof}
Suppose first that $\alpha_j\neq0$.  By the local property of developing map, for every $P_j$, there is a neighborhood such that $g_j=T_j\circ\bm{F}$ has the form
\[g_j(x)=x^{\alpha_j},\]
where $T_j\in PGL(2,\mathbb{C})$ and $x$ is a local coordinate near $P_j$ with $z(P_j)=0$.  Thus, we have
\[\frac{a\bm{F}+b}{c\bm{F}+d}=x^{\alpha_j},\]
where
\[\begin{pmatrix}a &b \\ c & d\end{pmatrix} \in PGL(2,\mathbb{C}).\]
So, locally, $\bm{F}$ has the expression
\[\bm{F}(x)=\frac{dx^{\alpha_j}-b}{-cx^{\alpha_j}+a},\,\,\,\,\,\,ad-bc=1.\]
Then we have
\[\bm{F}'(x)=\dfrac{(ad-bc)\alpha_jx^{\alpha_j-1}}{(-cx^{\alpha_j}+a)^2}.\]
Since the monodromy of $\bm{F}$ belongs to $Iso(\mathbb{C},\abs{dz}^2)$, $x=r\cdot e^{i\theta}$,
\[\bm{F}(x)=\bm{F}(re^{i\theta})=\frac{d\cdot r^{\alpha_j}\cdot e^{i\theta\alpha_j}-b}{-c\cdot r^{\alpha_j}\cdot e^{i\theta\alpha_j}+a},\]
and
\begin{eqnarray*}
  \bm{F}(re^{i\theta}e^{2\pi i}) &=& \frac{d\cdot r^{\alpha_j}\cdot e^{i\theta\alpha_j}e^{2\pi i\alpha_j}-b}{-c\cdot r^{\alpha_j}\cdot e^{i\theta\alpha_j}e^{2\pi i\alpha_j}+a} \\
   &=& \mathscr{T}(\bm{F}(re^{i\theta})) \\
   &=& e^{i\theta_0}\bm{F}(re^{i\theta})+w_0 \\
   &=& e^{i\theta_0}\frac{d\cdot r^{\alpha_j}\cdot e^{i\theta\alpha_j}-b}{-c\cdot r^{\alpha_j}\cdot e^{i\theta\alpha_j}+a}+w_0,
\end{eqnarray*}
where $\mathscr{T}\in Iso(\mathbb{C},\abs{dz}^2)$.

Reducing the above equality, we have
\[-cde^{2\pi i\alpha_j}x^{2\alpha_j}+(ade^{2\pi i\alpha_j}+bc)x^{\alpha_j}-ab\]
\[=-ce^{2\pi i\alpha_j}(de^{i\theta_0}-cw_0)x^{2\alpha_j}+[a(de^{i\theta_0}-cw_0)-ce^{2\pi i\alpha_j}(aw_0-be^{i\theta_0})]x^{\alpha_j}+a(aw_0-be^{i\theta_0}).\]
This equation holds only in a neighborhood of $x=0$.  However, by identity theorem of holomorphic functions, this equation holds in a whole univalent branch of $x^{\alpha_j}$.

Therefore, there are equalities
\begin{eqnarray*}
  cde^{2\pi i\alpha_j}    &=& ce^{2\pi i\alpha_j}(de^{i\theta_0}-cw_0); \,\,\,\,\,\,\,\,\,\,\,\,\,\,\,\,\,(1)\\
  ade^{2\pi i\alpha_j}+bc &=& a(de^{i\theta_0}-cw_0)-ce^{2\pi i\alpha_j}(aw_0-be^{i\theta_0}); \,\,\,\,\,\,\,\,(2)\\
  -ab                     &=& a(aw_0-be^{i\theta_0}). \,\,\,\,\,\,\,\,\,\,\,\,\,\,\,\,(3)
\end{eqnarray*}

If $c\neq0$, (1) turns out to be
\[d=de^{i\theta_0}-cw_0.\,\,\,\,\,\,\,\,\,\,\,\,(4)\]
By (2), we divide this equation to two cases.
\begin{itemize}
\item{\emph{Case 1: $a=0$.} \\By (2),\[bc=bce^{2\pi i\alpha_j}e^{i\theta_0}\]\[b=be^{2\pi i\alpha_j}e^{i\theta_0}.\]\\Since $ad-bc\neq0$ and $a=0$, we have $b\neq0$.   So, it comes to \[e^{2\pi i\alpha_j}e^{i\theta_0}=1.\]}
\item{\emph{Case 2: $a\neq0$.}\\By (3), \[a\omega_0=be^{i\theta_0}-b.\]  Substitute into (2), it turns that \[ade^{2\pi i\alpha_j}+bc=ade^{i\theta_0}-c(be^{i\theta_0}-b)+bce^{2\pi i\alpha_j},\] which is \[(ad-bc)e^{2\pi i\alpha_j}=(ad-bc)e^{i\theta_0}.\] So,\[e^{2\pi i\alpha_j}=e^{i\theta_0}.\]  Moreover, by (4) and $ad-bc\neq0$,\[ad=ade^{i\theta_0}-c(be^{i\theta_0}-b),\]\[e^{i\theta_0}=1.\] Thus, we have \[e^{2\pi i\alpha_j}=e^{i\theta_0}=1.\]}
\end{itemize}

Now, under the condition of monodromy and the above discussion, $\bm{F}$ has expression:
\begin{itemize}
\item{\emph{Case 1: $a=0$.}\[\bm{F}(x)=-\frac{d}{c}+\frac{b}{cx^{\alpha_j}}.\]}
\item{\emph{Case 2: $c=0$.}\[\bm{F}(x)=\frac{d}{a}x^{\alpha_j}-\frac{b}{a}.\]}
\item{\emph{Case 3: $a\neq0$, $c\neq0$ and $\alpha_j\in\mathbb{Z}$.}\[\bm{F}(x)=\frac{dx^{\alpha_j}-b}{-cx^{\alpha_j}+a},\] where $ad-bc\neq0.$}
\end{itemize}
In Case 1, locally, the conical metric $g$ has form
\begin{eqnarray*}
  g &=& \abs{d\bm{F}(x)}^2=\biggl|\frac{b^2\alpha_j^2}{c^2}\biggl|\cdot\abs{x}^{-2(\alpha_j+1)}\abs{dx}^2 \\
    &=& \mu^2(\xi)\abs{\xi}^{2(\alpha_j-1)}\abs{d\xi}^2,
\end{eqnarray*}
where the second formula is the definition of $g$.  And as before, we have
\[\frac{b\alpha_j}{c}x^{-(\alpha_j+1)}dx=e^{i\tilde{\theta}}e^{V(\xi)}\xi^{(\alpha_j-1)}d\xi,\]
for some $\tilde{\theta}$ fixed.

Then
\[\frac{b\alpha_j}{c}x^{-\alpha_j}\frac{dx}{d\xi}=e^{i\tilde{\theta}}e^{V(\xi)}\xi^{\alpha_j}\frac{x}{\xi}.\]
Since $\frac{dx}{d\xi}\neq0$, $\mu(0)\neq0$ and $\frac{x}{\xi}$ tends to $x'(0)\neq0$ as $\xi$ tends to zero, the left and right hand side tend to zero and infinity respectively as $\xi$ tends to zero, a contradiction.  

In Case 3, for $\alpha_j\in \mathbb{Z}$ and $\alpha_j>0$, locally, $\bm{F}$ could be expressed as
\begin{eqnarray*}
  \bm{F} &=& \frac{dx^{\alpha_j}-b}{-cx^{\alpha_j}+a} \\
         &=& -\frac{d}{c}+\frac{ad-bc}{ac}\cdot\frac{1}{1-\frac{c}{a}x^{\alpha_j}}  \\
         &=& -\frac{d}{c}+\frac{ad-bc}{ac}(1+\frac{c}{a}x^{\alpha_j}+(\frac{c}{a})^2x^{2\alpha_j}+\cdot\cdot\cdot) \\
         &=& C_0+C_1x^{\alpha_j}+C_2x^{2\alpha_j}+\cdot\cdot\cdot,
\end{eqnarray*}
where $C_1\not=0$.  Thus,
\[\bm{F}=C_0+x^{\alpha_j}g(x)\]
with $g$ holomorphic and $g(0)\not=0$.

If $\alpha_j\in \mathbb{Z}$ and $\alpha_j<0$, by the definition of the metric $g$,
\begin{eqnarray*}
  g &=& \abs{d\bm{F}(x)}^2=\abs{\dfrac{(ad-bc)\alpha_jx^{\alpha_j-1}}{(-cx^{\alpha_j}+a)^2}}^2\cdot\abs{dx}^2 \\
    &=& \mu^2(\xi)\abs{\xi}^{2(\alpha_j-1)}\abs{d\xi}^2.
\end{eqnarray*}
Thus,
\[\abs{(ad-bc)\alpha_j}^2\cdot\abs{\frac{x}{\xi}}^{2(\alpha_j-1)}\cdot\abs{\frac{dx}{d\xi}}^2=\mu^2(\xi)\abs{-cx^{\alpha_j}+a}^2.\]
Since $ad-bc\neq0$, $\alpha_j\neq0$, $\frac{x}{\xi}$ tends to $\frac{dx}{d\xi}(0)\neq0$ as $\xi\rightarrow 0$ and $\frac{dx}{d\xi}\neq0$ for every point in its domain, the coefficient $c$ must be zero in the right hand, otherwise the right hand of above equality blows up as $\xi$ tends to zero while the left hand does not.  Hence, for $\alpha_j\in \mathbb{Z}$ and $\alpha_j<0$,
\[\bm{F}(x)=\frac{d}{a}x^{\alpha_j}-\frac{b}{a}.\]

Summing up, we could conclude that the developing map could always be expressed as
\[\bm{F}=C_0x^{\alpha_j}+C_1,\]
where $\alpha_j\neq0$ and $C_0$, $C_1$ are constants.

Now, we turn to consider the case of $\alpha_j=0$.  We denote $\log{x}$ by $J(x)$.  Then
\[\bm{F}=\dfrac{dJ(x)-b}{-cJ(x)+a};\]
\[\bm{F}'(x)=\dfrac{(ad-bc)J'}{(a-cJ)^2}.\]
Thus, we have
\begin{eqnarray*}
  g &=& \abs{\bm{F}'(x)}^2\abs{dx}^2 \\
    &=& \dfrac{\abs{ad-bc}^2\abs{\frac{1}{x}}^2}{\abs{a-cJ}^4}\abs{dx}^2 \\
    &=& \mu^2(\xi)\frac{1}{\abs{\xi}^2}\abs{d\xi}^2
\end{eqnarray*}
as in the theorem .  And then
\[\dfrac{(ad-bc)}{x(a-cJ)^2}dx=e^{V(\xi)}e^{i\theta}\frac{1}{\xi}d\xi\]
for some $\theta$ fixed and this equality equals to
\begin{eqnarray*}
  (ad-bc)\frac{\xi}{x}\frac{dx}{d\xi} &=& e^{V(\xi)}e^{i\theta}(a-cJ)^2 \\
                                                    &=& e^{V(\xi)}e^{i\theta}(a-c\log{x})^2.
\end{eqnarray*}
However, this equality cannot be true if $c\neq0$ since the right hand side tends to infinity as $\xi$ tends to zero while the left hand side is finite.  So $c=0$.

Therefore,
\[\bm{F}(x)=\frac{d}{a}\log{x}-\frac{b}{a}.\]
\end{proof}

\section{Proof of Theorem 1.0.2}
\begin{proof}
First, suppose such a metric $g$ exists.  Let $\bm{F}$ be a developing map of $g$.  By the theorem about local expression of $\bm{F}$, we know that $d\bm{F}$ is a regular multiplicative differential whose divisor is $D=\prod_jP_j^{(\alpha_j-1)}$ and having normalized character.  However, from our theorem, this Prym differential exists uniquely up to multiplying nonzero constants.  Since $\bm{F}$ is obtained by integrating this Prym differential, we could conclude that $\bm{F}$ is unique up to transformations in $Iso(\Complex, \abs{dz}^2)$.  By the definition of developing maps, the parabolic conical metric on $M$ exists uniquely.

Next, the existence of the parabolic metrics representing the divisor $D$ could be obtained from the existence of such Prym differential having $D$ as its divisor.  For the divisor $D$ satisfying Gauss-Bonnet condition, we know that there exists uniquely a Prym differential $\bm{\omega}$ whose divisor is $D$.  Suppose that we could expressed $\bm{\omega}$ in the punctured disk centered at $P_j$ as
\[\bm{\omega}=f(x)dx,\]
where $x$ is a local coordinate centered at $P_j$ on $M$.  By the definition of regularity, $P_j$ is a simple pole of $\frac{df}{f}$.
Then
\[\frac{df}{f}=\frac{\alpha_j-1}{x}+t_j(x),\]
where $t_j(x)$ is holomorphic near $P_j$.   Thus, we have
\[f(x)=e^{\int t_j(x)dx}\cdot x^{\alpha_j-1}.\]
So
\[\abs{f(x)}^2\abs{dx}^2=\abs{e^{2\int t_j(x)dx}}\abs{x}^{2(\alpha_j-1)}\abs{dx}^2\]
defines a parabolic conical metric having angle $2\pi\alpha_j$ at $P_j$ and its developing maps are $\int^z \bm{\omega}$.  Therefore, we prove the sufficiency of Gauss-Bonnet condition.

Finally, since $d\bm{F}$ is a Prym differential, we have
\[deg(\bm{F})=\chi(M),\]
which is stated as a corollary in the page 129 of \cite{riesurf}.  Thus, we have proved the necessity of Gauss-Bonnet condition.
\end{proof}

\begin{example}
Now, we give a concrete expression of parabolic metrics on the two-sphere $\mathbb{S}^2$.  Let $D=\prod_jP_j^{(\alpha_j-1)}$ be a divisor with real coefficients on $\mathbb{S}^2$ with
\[\sum_{j=1}^n\alpha_j-1=\chi(\mathbb{S}^2)=-2.\]
Take Prym differential
\[\bm{\omega}=\prod_{j=1}^n (z-P_j)^{\alpha_j-1}dz,\]
where $z$ is the standard coordinate on $\mathbb{S}^2=\mathbb{C}\cup\{\infty\}$.  By our theorem above, $g=\abs{\bm{\omega}}^2$ is the unique parabolic metric representing the divisor $D$ on $\mathbb{S}^2$.
\end{example}
We note that $\bm{\omega}$ here is the same differential as in the well-know Schwarz-Christoffel formula and the developing map of $g$, say, $\int^z \bm{\omega}$, generalize the Schwarz-Christoffel formula to arbitrary points $P_j$'s and arbitrary real numbers $\alpha_j$'s with $\sum\alpha_j=n-2$.


School for the Gifted Young, University of Science and Technology of China, Hefei, China

E-mail address:santaiqu@gmail.com

\begin{thebibliography}{99}

\small{



\bibitem{Bryant 1988} R. L. Bryant, Surfaces of mean curvature one in hyperbolic space, {\it Th\'{e}orie des vari\'{e}t\'{e}s minimales et applications} (Palaiseau, 1983-1984), Ast\'{e}risque {\bf 154-155}, Soei\'{e}t\'{e} Math\'{e}matique de France, Paris (1988).


\bibitem{Troyanov 1991} M. Troyanov, Prescribing curvature on compact surfaces with conical singularities, {\it Trans. Amer. Math. Soc.} {\bf 321}:2, 793-821
(1991).


\bibitem{McOwen 1988} R. C. McOwen, Point singularities and conformal metrics on Riemann surfaces, {\it Proc. Amer. Math. Soc.} {\bf 103}:1, 222-224
(1988).




\bibitem{UY00} M. Umehara and K. Yamada, Metrics of constant curvature 1 with three conical singularities on the 2-sphere, {\it Illinois J. Math.} {\bf 44}:1, 72-94  (2000)

\bibitem{Er04} A. Eremenko, Metrics of positive curvature with conic singularities on the sphere, {\it Proc. Amer. Math. Soc.} {\bf 132}:11, 3349-3355
(2004)

\bibitem{Binxu} Qing Chen, Wei Wang, Yingyi Wu and Bin Xu, Conformal metrics with constant curvature one and finitely many conical singularities on compact Riemann surfaces, {\it Pacific Journal of Mathematics}, {\bf 273}:1, 75-100
(2015)

\bibitem{riesurf} Hershel M. Farkas and Irwin Kra, Riemann Surfaces (second edition), {\it Springer-Verlag New York, Inc} (1991).




\bibitem{Yo87} M. Yoshida, Fuchsian differential equations: with special emphasis on the Gauss-Schwarz theory, {\it Aspects of Mathematics} {\bf 11}, (1987).



}

\end{thebibliography}
\end{document}